\documentclass[dvipdfmx]{amsart}

\makeatletter
\@addtoreset{equation}{section}

\makeatother
\usepackage{eucal}
\usepackage{amsfonts, amsmath, amssymb, amsthm}
\usepackage{url}
\usepackage{cleveref}
\usepackage[dvips]{graphicx}
\usepackage[dvips]{color}
\usepackage{amscd}
\usepackage[right]{lineno}
\usepackage{cleveref}
\usepackage{enumerate}
\usepackage[all]{xy}
\usepackage{eucal}

\newtheorem{theorem}{Theorem}[section]
\newtheorem{lemma}[theorem]{Lemma}
\newtheorem{proposition}[theorem]{Proposition}

\theoremstyle{definition}
\newtheorem{definition}[theorem]{Definition}

\DeclareMathOperator{\ext}{Ext}

\newcommand{\teich}{\mathcal{T}}

\newcommand{\moduli}{\mathcal{M}}

\newcommand{\mcg}{{\rm Mod}}

\newcommand{\ray}{{\boldsymbol r}}
\newcommand{\busemann}{{\boldsymbol b}}

\newcommand{\mf}{\mathcal{MF}(\Sigma_g)}
\newcommand{\pmf}{\mathcal{PMF}(\Sigma_g)}

\newcommand{\hyperbolic}{\mathbb{H}}

\AddToHook{env/lemma/begin}{\crefalias{theorem}{lemma}}
\AddToHook{env/corollary/begin}{\crefalias{theorem}{corollary}}
\AddToHook{env/proposition/begin}{\crefalias{theorem}{proposition}}
\AddToHook{env/remark/begin}{\crefalias{theorem}{remark}}

\begin{document}
%\frontmatter

\title{Limit sets of mapping class groups}
\author{ Hideki Miyachi}
\author{Ken'ichi Ohshika}

\date{\today}

\address{School of Mathematics and Physics,
College of Science and Engineering,
Kanazawa University,
Kakuma-machi, Kanazawa,
Ishikawa, 920-1192, Japan
}
\email{miyachi@se.kanazawa-u.ac.jp}

\address{Faculty of Science, Gakushuin University, Toshima-ku, Tokyo 171-8588, Japan, and
Max-Planck-Institut für Mathematik, Vivatsgasse 7, 53111 Bonn, Germany
}
\email{ohshika@math.gakushuin.ac.jp}

\thanks{This work is partially supported by JSPS KAKENHI Grant Numbers
25K00909,
23K22396,
20K20519}
\subjclass[2020]{20F38, 30F60, 32G15, 57K20, 57M50}
\keywords{Teichm\"{u}ller space, mapping class group, horospherical limit}

\begin{abstract}
We introduce the notions of horospherical limit point and big horospherical limit point in the projective measured foliation space for a subgroup of the mapping class group, which are analogous to  these notions for hyperbolic spaces.
We classify Teichm\"{u}ller geodesic rays directed by projective measured foliations into conical, horospherical, big horospherical limit points, and show that these can be determined by the topological properties of the directing foliations: except for the case of minimal and uniquely ergodic projective measured foliation, whose corresponding ray can be either a conical limit point or a non-conical horospherical limit point.
\end{abstract}

\maketitle

\section{Introduction}
The notion of limit set for a Kleinian group has a long history.
It already appeared implicitly in the work of Poincar\'{e} \cite{Po}. 
The influential works of Ahlfors, Bers, among others,  relate the complements of limit sets to deformation theory of Kleinian groups.
Beardon and Maskit \cite{BeMa} introduced the notion of conical limit point, which they called approximation point.
 According to them, a limit point is said to be conical when it is an accumulation point of an orbit which lies in a fixed neighbourhood of a geodesic ray tending to that point.
The conical limit set coincides with the limit set when the Kleinian group is convex cocompact, and the parabolic fixed points are only non-conical limit sets for geometrically finite groups (Bishop \cite{Bi}).
It was also shown that a geometrically infinite Kleinian group has uncountably many non-conical fixed points (Bishop \cite{Bi2}).

Sullivan \cite{Su} introduced the notion of horospherical limit point of a Kleinian group, which is more general than that of conical limit point.
A limit point is said to be horospherical when an orbit intersects every horoball tangent to the point.
He showed there that horospherical limit points are contained in the \lq conservative part' of the action of the Kleinian group on the sphere at infinity.
Lecuire and Mj \cite{LM} gave a necessary and sufficient condition for a limit point of a geodesic ray to be horospherical but non-conical.
Tukia gave a more general notion which is called big horospherical limit point in \cite{Tu}.
A limit point is said to be a big horospherical limit point when an orbit intersects some horosphere at infinitely many points.
Tukia showed that Sullivan's result on conservative action still holds for the big horospherical limit set.

These notions on limit points still work for higher dimensional hyperbolic spaces, or more generally, Gromov hyperboic spaces and their boundaries at infinity.
Although Teichm\"{u}ller space with the Teichm\"{u}ller metric is not Gromov hyperbolic (Masur-Wolf \cite{MW}), the notions of limit point and limit set for a subgroup of the mapping class group were introduced by McCarthy-Papadopoulos \cite{McPa}, considering the action of the mapping class on the Thurston compactification of Teichm\"{u}ller space.
Also, conical limit points were defined by Kent-Leininger \cite{LeiKen} taking Teichm\"{u}ller geodesic rays as  surrogates of hyperbolic geodesic rays.
We note that although in general a Teichm\"{u}ller geodesic ray may not have a unique landing point at  $\pmf$, which is the sphere at infinity of the Thurston compactification, for the definition of conical limit point, the corresponding Teichm\"{u}ller geodesic rays are in the directions of  uniquely geodesic projective measured foliations, and hence have unique landing points at infinity.
Thus we can regard conical limit points as lying on $\pmf$.

In the present paper, we shall define horospherical limit points and big horospherical limit points for subgroups of the mapping class group.
Since we have also to consider the directions of non-uniquely ergodic projective measured foliations, we cannot use  the sphere at infinity of the Thurston compactification  as the space in which they lie.
Instead, as the boundary at infinity, we consider the \lq ray compactification' of Teichm\"{u}ller space, regarding the ideal  points at infinity of the rays issued at some fixed base point as the boundary points at infinity.
By the theory of Hubbard-Masur \cite{HM}, we can identify the ray compactification with the space of projective measured foliations.
Although the topology of the compactification depends on the base point, and the mapping class group action on $\teich(\Sigma_g)$ does not extend continuously to its action on $\pmf$ in the ray compactification, we shall show that we can define horospherical limit points and big horospherical limit points as lying on $\pmf$ independently of the base point.
Conical limit points can also be regarded as the boundary at infinity $\pmf$ of the ray compactification, since at uniquely ergodic projective measured foliations, the ray compactificaton and the Thurston compactification coincide.

The main theorem of this paper concerns the action of the entire mapping class group.
We shall prove that by looking at the type of  projective measured foliation, we can determine to which kind of limit point it corresponds, except for a uniquely ergodic projective measured foliation, in which case it may correspond to either horospherical, non-conical limit point  or conical limit point.

\begin{theorem}
\label{thm:main1}
Let $\Sigma_g$ be a closed orientable surface of genus $g\ge 2$.
Consider the action of the mapping class group $\mcg(\Sigma_g)$ on the Teichm\"{u}ller space $\teich(\Sigma_g)$.
Then for points $[\lambda]$ in $\pmf$, the following hold.
\begin{enumerate}
\item $[\lambda]$ is a big horospherical limit point, but not a horospherical limit point if and only if $\lambda$ contains a non-singular closed leaf.
\item
If $[\lambda]$ is either non-minimal or  non-uniquely ergodic, and does not contain a non-singular closed  leaf,  then it is a horospherical limit but not a conical limit point.
\item If $[\lambda]$ is a conical limit point, then $\lambda$ is minimal and uniquely ergodic.
\item
If a point $[\lambda]$ is minimal and uniquely ergodic, then there are two possibilities: it is either conical limit point or a non-conical horospherical limit point.
Both cases really occur.
%Let $\Lambda^g_C$, $\Lambda^g_h\subset \pmf$ be the conical limit set and the horospherical limit set of the mapping class group $\mcg(\Sigma_g)$, respectively.
\item The complement of the conical limit set is a null set with respect to the Thurston measure.
\end{enumerate}
\end{theorem}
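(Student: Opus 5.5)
The plan is to work throughout with the Teichm\"{u}ller ray $\ray_{[\lambda]}$ issued at a base point $x_0$ in the direction of $\lambda$, and with its Busemann function $\busemann_{[\lambda]}$. A horoball at $[\lambda]$ will be a sublevel set $\{\busemann_{[\lambda]}\le -t\}$. I will first check that the three limit notions do not depend on the base point, using the asymptotic comparison of rays with the same vertical foliation. All the analysis then goes through the extremal-length description of the Busemann function due to the first author:
\[
\busemann_{[\lambda]}(y)=\tfrac12\log\sup_{\mu}\frac{\ext_y(\mu)}{\ext_{x_0}(\mu)}
\quad\text{(suitably normalised over the ergodic components of } \lambda).
\]
Consequently, whether some orbit point $\phi(x_0)$ lies in a deep horoball reduces to whether $\phi$ can make the extremal lengths of all components of $\lambda$ uniformly small relative to $x_0$.

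For (1), suppose $\lambda$ has a cylinder component with core curve $c$ of weight $a$. On any $y$, the contribution of $c$ to $\ext_y(\lambda)$ is bounded below by a constant times $a^2\,\ext_y(c)$, and by Maskin-type cylinder estimates this is at least a constant depending on $a$, once $\ext_y(c)$ is bounded below on the thick part. The orbit of $x_0$ is cobounded, so $\busemann_{[\lambda]}$ is bounded below on it. This shows $[\lambda]$ is not horospherical. To see that it is big horospherical, I will use the Dehn twists $T_c^n$ and the pseudo-Anosovs supported on the complement of $c$. These fix $\lambda$ on the cylinder part and keep $\busemann_{[\lambda]}(\phi(x_0))$ bounded, which yields infinitely many orbit points in a fixed horoball. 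Conversely, if $\lambda$ has no cylinder, I will show that every orbit meeting a fixed horoball in infinitely many points has points arbitrarily deep, which gives the "only if" direction. This converse step is the main obstacle. I would try to handle it by exploiting the minimal components: a subsurface pseudo-Anosov approximating each minimal component drives the extremal length to $0$.

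For (2), write $\lambda=\sum\lambda_i$ with no annular parts. For each minimal component supported on a subsurface $S_i$, I choose a mapping class supported on $S_i$ that shrinks $\lambda_i$. This uses the density of pseudo-Anosov attracting foliations in $\mathcal{PMF}(S_i)$, together with Masur's recurrence for non-uniquely ergodic pieces, through products over the ergodic measures. Composing these maps over disjoint subsurfaces gives $\phi$ with $\busemann_{[\lambda]}(\phi(x_0))\to-\infty$, so $[\lambda]$ is horospherical. Non-conicality follows from (3).

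For (3), a conical limit point means the ray returns infinitely often to a bounded neighbourhood of the orbit, hence to the thick part of moduli space. By Masur's criterion the ray is then recurrent, so $\lambda$ is minimal and uniquely ergodic; non-minimal rays leave every compact set of $\moduli$ because of the short curves in the boundary of the subsurfaces.

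For (4), a minimal uniquely ergodic $\lambda$ is always horospherical: the stable foliations of pseudo-Anosovs are dense, and by a Kerckhoff-type argument the Busemann function can be driven to $-\infty$ along a suitable orbit. If the ray is recurrent in $\moduli$, the point is conical. A divergent ray that is uniquely ergodic, for instance one given by Masur's examples or by Cheung's, is horospherical but not conical. Finally, (5) follows from Masur's theorem that Thurston-almost every ray is recurrent, so almost every $[\lambda]$ is conical.
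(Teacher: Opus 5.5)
There is a genuine gap. Your treatment of base-point independence, (3), (5) and the Dehn-twist half of (1) matches the paper. The gap is at the step you flag yourself: showing that every $[\lambda]$ without a cylinder of closed leaves is horospherical. That one claim carries the horospherical assertions in (2) and (4) and the ``only if'' direction of (1), and your proposed mechanism does not prove it.

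Density of pseudo-Anosov stable foliations only gives a $\psi$ whose stable foliation $\nu$ is projectively close to $[\lambda]$. Powers of $\psi$ shrink $\lambda$ only until the part of $\lambda$ transverse to $\nu$ (detected by $I(\lambda,\nu)>0$) takes over and is expanded. Density says nothing about how deep you get before that happens, and this is exactly the difficulty for divergent directions such as the Cheung--Masur examples. Your appeal to ``Masur's recurrence for non-uniquely ergodic pieces'' is backwards: by Masur's criterion, minimal non-uniquely ergodic directions give divergent rays, so there is no recurrence to exploit. Finally, shrinking each subsurface component by a mapping class supported on that subsurface is the same unproved claim in relative form.

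The missing idea is to pass to moduli space. Since $s\mapsto d_T(y,\ray^\lambda_{x_0}(s))-s$ is non-increasing, you have $\busemann^{x_0}_\lambda(\phi x_0)\le d_T(\phi x_0,\ray^\lambda_{x_0}(t))-t$ for every $t$. Taking infima over $\phi$ and $t$ (and using $\busemann^{x_0}_\lambda(\phi x_0)\ge \inf_t(d_{\moduli}-t)$ for the reverse inequality) gives
\[
\inf_{\phi\in\mcg(\Sigma_g)}\busemann^{x_0}_\lambda(\phi x_0)=\inf_{t\ge 0}\bigl(d_{\moduli}(\varpi(x_0),\varpi(\ray^\lambda_{x_0}(t)))-t\bigr).
\]
So for the full mapping class group, $[\lambda]$ is horospherical exactly when $\varpi\circ\ray^\lambda_{x_0}$ is not almost minimising. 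The Farb--Masur theorem says this happens exactly when $\lambda$ has no cylinder. This yields the horospherical claims in (2) and (4), the non-horospherical claim in the ``if'' part of (1), and hence the ``only if'' part of (1), without building any mapping classes by hand. This is the paper's route.

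Two smaller corrections.

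First, as displayed, your formula for $\busemann$ equals $d_T(x_0,y)$ by Kerckhoff's formula and does not involve $\lambda$ at all. The right input is Walsh's formula $\busemann^{x_0}_\lambda(y)=\log\sup_\mu \mathcal{E}_{\lambda,x_0}(\mu)/\ext_y(\mu)^{1/2}$. Keeping only the cylinder term $ac$ in $\mathcal{E}_{\lambda,x_0}$ gives $\busemann^{x_0}_\lambda(y)\ge\frac12\log\ext_y(c)-C$. With that bound, your cobounded-orbit argument for ``not horospherical'' is correct and is a legitimate alternative to quoting Farb--Masur there. A lower bound on $\ext_y(\lambda)$ alone only controls $\busemann$ when $\lambda$ has a single ergodic component.

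Second, drop the pseudo-Anosovs supported on the complement of $c$. If $\lambda$ has a component there, they move it and push $\busemann$ to $+\infty$. The twists $T_c^n$ alone suffice: $T_c$ fixes every ergodic component, so Walsh's formula gives the exact invariance $\busemann^{x_0}_\lambda(T_c^n x)=\busemann^{x_0}_\lambda(x)$.
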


By this theorem, for the mapping class group action on Teichm\"{u}ller space, we can describe completely the horospherical limit set and the big horospherical limit set in terms of measured foliations deciding directions of rays.

\section{Preliminaries}
Let $\Sigma_{g}$ be a closed oriented surface of genus $g$. Throughout this paper, we assume that $g\ge 2$.

\subsection{Teichm\"uller space}
A marked closed Riemann surface $(M,f)$ of genus $g$ is a pair of a closed Riemann surface of genus $g$, and an orientation preserving diffeomorphism $f\colon \Sigma_{g}\to M$.
We regard marked Riemann surfaces $(M_1,f_1)$ and $(M_2,f_2)$ as equivalent if there is a biholomorphic homeomorphism $h\colon M_1\to M_2$ such that $h\circ f_1$ is homotopic to $f_2$.
%\begin{equation}
%\label{eq:Teichmuller-equivalent}
%\xymatrix{
%\Sigma_{g} \ar[r]^{f_1}\ar[rd]_{f_2} & M_1 \ar[d]^h \\
%& M_2
%}
%\end{equation}
The \emph{Teichm\"uller space} $\teich(\Sigma_g)$ is the set of equivalence classes of marked closed Riemann surfaces of genus $g$.
For $x=(M_1,f_1)$, $y=(M_2,f_2)\in \teich(\Sigma_g)$,
the \emph{Teichm\"uller metric} on $\teich(\Sigma_g)$ is defined by setting the distance $d_T$ between $x$ and $y$ to be
\[
d_T(x,y)=\frac{1}{2}\log \inf_hK(h),
\]
where $h$ ranges over all quasi-conformal maps  from $M_1$ to $M_2$ such that $h\circ f_1$ is homotopic to $f_2$.
The Teichm\"uller metric is  complete and uniquely geodesic, and equipped with this metric, $\teich(\Sigma_g)$ is homeomorphic to $\mathbb{R}^{6g-6}$.

\subsection{Measured foliations}
\label{mf}
A \emph{measured foliation} $F$ on $\Sigma_{g}$ is a codimension-1 foliation on $\Sigma_g$ with isolated singularities of negative indices equipped with a transverse measure which is invariant under holonomies.
% of orders $k_1$, $\cdots$, $k_n$ at $x_1$, $\cdots$, $x_n$ is given by an open cover $\{U_i\}_{i\in I}$ of $\Sigma_{g}$ and non-vanishing $C^\infty$ real-valued $1$-forms $\varphi_i$ on $U_i$ satisfying the following conditions:
%\begin{enumerate}
%\item $\varphi_i=\pm \varphi_j$ on $U_i\cap U_j$;
%\item $k_i\ge -1$ and $k_i\ne 0$. If $k_i=-1$, then $x_i\in \Sigma_g\setminus \Sigma_{g}$; and
%\item at each $x_i$, there is a local chart $(u,v)\colon V\to \mathbb{R}^2$ such that, for $z=u+iv$, one has $\varphi={\rm Re}(z^{k_i/2}dz)$ on $V\cap U_i$ for some branch of $z^{k_i/2}$ on $U_i\cap V$.
%\end{enumerate}
%Such a family $\{(U_i,\varphi_i)\}_{i\in I}$ is called an atlas for $F$.
%For a $C^1$-curve $\gamma\colon [a,b]\to \Sigma_g$ ($\supset \Sigma_{g}$), we define the \emph{$F$-length} $l_F(\gamma)$ of $\gamma$ by
%\[
%l_F(\gamma)=\int_a^b|\varphi_i(\dot{\gamma}(t))|dt,
%\]
%where the integral is understood by subdividing $[a,b]$ so that each subarc lies in a single chart of the atlas.

Two measured foliations are said to be Whitehead equivalent if there is a finite sequence of Whitehead moves, i.e. collapsing an arc on a leaf connecting two singularities, and its opposite operation, which changes one to the other.
Let $\mathcal{S}$ be the set of free homotopy classes of non-contractible simple closed curves on $\Sigma_{g}$.
For $\gamma\in \mathcal{S}$, we define $F(\gamma)$ to be the infimum of $\int_{\gamma_1} dF$ over all simple closed curves $\gamma_1$ in the class $\gamma$.
Two measured foliations $F_1$ and $F_2$ are Whitehead equivalent if and only if $F_1(\alpha)=F_2(\alpha)$ for all $\alpha\in \mathcal{S}$.
(See Fathi-Laundenbach-Po\'{e}naru \cite{FLP}.)

Let $\mf$ be the set of equivalence classes of measured foliations on $\Sigma_{g}$.
The set of all measured foliations has a weak topology with respect to the transverse arcs, and we give $\mf$ its quotient topology.
%The space $\mf$ is topologized as follows: a sequence $(F_n)_{n=1}^\infty$ in $\mf$ converges to $F\in\mf$ if $F_n(\alpha)\to F(\alpha)$ as $n\to \infty$ for all $\alpha\in \mathcal{S}$.
The set $\mathbb{R}_{>0}$ of positive real numbers acts on $\mf$ by multiplying the transverse measures.
%\[
%\mathbb{R}_{>0}\times \mf\ni (t,F)=(t,\{U_i,\varphi_i\}_{i\in I})\mapsto tF=\{U_i,t\varphi_i\}_{i\in I}.
%\]
The quotient space $\pmf$ is called the \emph{space of projective measured foliations} on $\Sigma_{g}$.
Thurston showed that the spaces $\mf$ and $\pmf$ are homeomorphic to $\mathbb{R}^{6g-6}\setminus \{0\}$ and $\mathbb{S}^{6g-7+2m}$, respectively.

For a measured foliation $F$, its support $|F|$ is the underlying foliation with its transverse measure forgotten.
Let $M(|F|)$ be the space of transverse measures supported on $|F|$.
Then there are at most $3g-3$ ergodic measures $m_1, \dots , m_k\ k \leq 3g-3$ in $M(|F|)$ such that every measure $\mu$ in $M(|F|)$ is a linear combination with non-negative coefficients of $m_1, \dots , m_k$.
Therefore, the transverse measure of $F$ is expressed as a sum $\alpha_1 m_1+ \dots + \alpha_k m_k$.
We can regard $|F|$ with the transverse measure $\alpha_i m_i$ a measured foliation (which may not be of full support), and denote it by $G_i$.
Then we can write $F=G_1+\dots + G_k$, which we call the {\em ergodic decomposition} of $F$.
We say that $F$ is {\em uniquely ergodic} when $F$ cannot be decomposed into more than one ergodic measure.

We say that a measured foliation $F$ is {\em minimal} when each of its leaves is dense on $\Sigma_g$, including singular ones.
If  $F$ is not minimal, then there is an incompressible subsurface $S$ of $\Sigma_g$ such that for any simple closed curve $\gamma$ on $S$, we have $F(\gamma)=0$.
In this situation, we can cut $F$ along its singular leaves and isotope the complement into $S$.
There is a unique subsurface which is minimal among such incompressible subsurfaces $S$.
We call this subsurface {\em minimal supporting surface} of $F$.

There is a natural generalisation of geometric intersection number to $\mf \times \mf$, which we define by $$I(F,G)=\int_{\Sigma_g} dFdG,$$ representing  them in their equivalence classes so that they are transverse outside their critical points.
We note that when we write $dF$, we identify $F$ with its transverse measure.

\subsection{Holomorphic quadratic differentials}
Fix a point $x=(M,f)\in \teich(\Sigma_g)$.
Let $\mathcal{Q}_x$ be the space of holomorphic quadratic differentials $q=q(z)dz^2$ on $M$.
% whose $L^1$-norm
%\[
%\|q\|=\int_M|q(z)|dxdy
%\]
%is finite.
The space $\mathcal{Q}_x$ is a complex Banach space with the $L^1$-norm defined by $\|q\|=\int_M|q(z)|dxdy$ for $q \in \mathcal Q_x$.
%A quadratic differential $q\in \mathcal{Q}_x$ may have poles of order $1$ at marked points.

For $q=q(z)dz^2\in\mathcal{Q}_x$, take a covering $\{U_i\}_{i\in I}$ of $M\setminus {\rm Crit}(q)$ such that each $U_i$ is simply connected, where ${\rm Crit}(q)$ denotes the set of zeros of $q$.
On each $U_i$, we choose a branch of $\sqrt{q}=\sqrt{q(z)}dz$ and define
\[
\varphi_i={\rm Re}(\sqrt{q(z)}dz).
\]
Together with the standard local models at zeros  of $q$, the family
\[
F_q=\{(f^{-1}(U_i),f^*\varphi_i)\}_{i\in I}
\]
defines a measured foliation on $\Sigma_{g}$.

\begin{theorem}[Hubbard--Masur {\cite{HM}}, Gardiner {\cite{Gard1}}]
    \label{thm:Hubbard-Masur}
    The correspondence
    \[
    \mathcal{Q}_x\ni q\mapsto F_q\in \mf    \]
    is a homeomorphism.
\end{theorem}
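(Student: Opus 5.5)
Since both $\mathcal{Q}_x$ and $\mf\cup\{0\}$ are homeomorphic to $\mathbb{R}^{6g-6}$ (the former as a complex vector space of dimension $3g-3$ by Riemann--Roch, the latter by Thurston's theorem recalled above), I would argue topologically. I would show that the map $\Phi\colon q\mapsto F_q$ is continuous, injective and proper, with $0\mapsto 0$, and then conclude by invariance of domain. Invariance of domain makes $\Phi$ an open map. Properness makes its image closed. Since $\mathbb{R}^{6g-6}$ is connected, $\Phi$ is then surjective, and a continuous open bijection is a homeomorphism.

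\emph{Continuity.} Since $\mf$ is topologized by the functions $F\mapsto F(\gamma)$, $\gamma\in\mathcal{S}$, it suffices to show that $q\mapsto F_q(\gamma)$ is continuous for each $\gamma$. Here $F_q(\gamma)$ is the $q$-height of $\gamma$, the infimum of $\int_{\gamma_1}|{\rm Re}\sqrt{q}|$ over curves $\gamma_1$ in $\gamma$. The plan is to realize this infimum by a $q$-geodesic representative, which is a concatenation of horizontal and vertical segments, and to check that it varies continuously under perturbation of $q$. The key estimate is that for $q,q'$ close in norm, a $q$-geodesic representative has $q'$-height at most its $q$-height plus a small error, and symmetrically. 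By local uniform convergence of $\sqrt{q'}\to\sqrt{q}$ away from the zeros, together with the standard models near the zeros, this gives upper semicontinuity in both directions. Along the way I would note that $F_q\neq 0$ for $q\neq 0$, since the horizontal foliation of a nonzero differential has positive height on some closed curve.

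\emph{Properness.} Suppose $\|q_n\|\to\infty$ while $F_{q_n}$ stays in a compact subset of $\mf$. Normalize to $\hat q_n=q_n/\|q_n\|$ and pass to a subsequence with $\hat q_n\to\hat q$, where $\|\hat q\|=1$. By continuity, $F_{\hat q_n}\to F_{\hat q}\neq 0$. Heights scale as $F_{tq}=\sqrt{t}\,F_q$, so $F_{q_n}(\gamma)=\|q_n\|^{1/2}F_{\hat q_n}(\gamma)\to\infty$ for any $\gamma$ with $F_{\hat q}(\gamma)>0$. This contradicts boundedness. Properness on the whole space then follows because $F_q\to 0$ exactly when $q\to 0$.

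\emph{Injectivity (main obstacle).} This is the analytic heart, and I would follow the Marden--Strebel heights argument. Suppose $F_q=F_{q'}$ in $\mf$, so that $q$ and $q'$ have equal heights on every simple closed curve. I would then aim to prove the inequality
\[
\|q\|\le \int_M |{\rm Re}\sqrt{q'}|\,|\sqrt{q}|,
\]
by comparing the heights of vertical $q$-trajectories: vertical trajectories of $q$ are first approximated by closed curves, using density of Jenkins--Strebel differentials or Masur's recurrence of vertical trajectories, and the equality of heights is then applied along them. The inequality symmetric in $q$ and $q'$ also holds. By Cauchy--Schwarz, $\int|{\rm Re}\sqrt{q'}||\sqrt q|\le \|q'\|^{1/2}\|q\|^{1/2}$, so adding the two inequalities forces equality throughout. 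Equality in Cauchy--Schwarz, together with equality in $|{\rm Re}\sqrt{q'}|\le|\sqrt{q'}|$ pointwise, gives ${\rm Re}\sqrt{q'}=\pm{\rm Re}\sqrt{q}$ almost everywhere. Holomorphy then yields $q=q'$. The delicate point is justifying the heights inequality for non-closed trajectories, and I expect this to need the approximation argument.
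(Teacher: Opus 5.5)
The paper gives no proof of this statement. It is quoted from Hubbard--Masur and Gardiner, so your proposal has to stand on its own.

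The architecture is sound. Given Thurston's $\mf\cup\{0\}\cong\mathbb{R}^{6g-6}$, continuity, injectivity and properness give a homeomorphism by invariance of domain, and your properness argument is correct. Your Cauchy--Schwarz equality analysis is also fine: it is Gardiner's minimal-norm argument, and it forces $q'/q$ to be a positive constant, hence $1$. It rests on the inequality $\|q\|\le\int_M|{\rm Re}\sqrt{q'}|\,|\sqrt q|$, with $\sqrt{q'}$ computed in the natural parameter of $q$.

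\textbf{The gap: the key inequality is not proved.} This inequality carries essentially all the content, and the mechanism you sketch would not prove it.
\begin{enumerate}
\item With your convention $F_q=|{\rm Re}\sqrt q\,dz|$, vertical $q$-trajectories are leaves of $F_q$ and have zero $F_q$-height, so comparing heights along them yields nothing. The inequality comes from integrating along \emph{horizontal} trajectories.
\item More seriously, $F_q=F_{q'}$ only says that the two infima over each homotopy class agree. If you close a long horizontal arc $\beta$ to a closed curve $\hat\beta$, you get $\int_\beta|{\rm Re}\sqrt{q'}|+o(1)\ge F_{q'}(\hat\beta)=F_q(\hat\beta)$. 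This is useless unless you also know $F_q(\hat\beta)\ge\ell_q(\beta)-o(1)$, i.e.\ that $\hat\beta$ essentially realises the $F_q$-height of its class. That is the missing idea.
\end{enumerate}

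Here is how to supply it. Take returns of $\beta$ close to its starting point with the same horizontal orientation. Poincar\'e recurrence for the area-preserving horizontal flow gives these, working on the orientation double cover if $q$ is not a square. Close up by a short vertical segment. The resulting curve is quasi-transverse to $F_q$, and quasi-transverse curves realise the intersection number with $F_q$ (Fathi--Laudenbach--Po\'enaru). The curve $\hat\beta$ need not be simple, but equivalent foliations have the same intersection number with every closed curve.

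You then still need an averaging step. Apply Birkhoff's ergodic theorem to $|{\rm Re}\sqrt{q'/q}|$, which is $|q|$-integrable since $|q'|^{1/2}|q|^{1/2}\in L^1$. This converts the bound along return times into the integrated inequality. The approximation that matters is of trajectories by closed curves. Approximating $q$ by Jenkins--Strebel differentials does not by itself help, since the hypothesis $F_q=F_{q'}$ does not pass to the approximants.

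\textbf{A smaller point in the continuity step.} First, $q$-geodesics are concatenations of saddle connections in arbitrary directions, not of horizontal and vertical segments. Second, the chart estimate $\bigl||{\rm Re}\sqrt{q'}|-|{\rm Re}\sqrt q|\bigr|\le|q'-q|^{1/2}$ gives the half of semicontinuity that uses the fixed $q$-geodesic. The \emph{symmetric} half uses the varying $q'$-geodesics. There you need their chart length near the zeros of $q$ to be bounded uniformly for $q'$ near $q$. Alternatively, replace their pieces inside small discs around those zeros, bounding the number of crossings of a surrounding annulus by the $q'$-length. Your sketch does not address this.
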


%Let $x=(M,f)\in \teich(\Sigma_g)$.
For $F\in \mf$, the quadratic differential corresponding to $F$ under the inverse of the map above is denoted by $q_{F,x}$.
% in \Cref{thm:Hubbard-Masur} is called the \emph{Hubbard--Masur differential} for $F$ on $x$.
%Conversely for a measured foliation $F\in \mf$, we use the symbol $q

%For any $\alpha\in \mathcal{S}$, there is a unique holomorphic quadratic differential $q_{\alpha,x}$ on $M$ such that almost all vertical trajectories are closed and homotopic to curves in $f(\alpha)$.
%Then
%\begin{equation}
%\label{eq:geometric_intersection_scc}
%F_{q_{\alpha,x}}(\gamma)=i(\alpha,\gamma)=\min\{\#(\alpha_1\cap \gamma_1)\mid \alpha_1\in \alpha,\gamma_1\in \gamma\}
%\end{equation}
%for all $\gamma\in \mathcal{S}$.
%The differential $q_{\alpha,x}$ is called the \emph{Jenkins--Strebel differential} for $\alpha$ on $x$.
%After identifying $\alpha\in \mathcal{S}$ with the measured foliation $F_{q_{\alpha,x}}$, we regard $\mathcal{S}$ as a subset of $\mf$.
%For $\alpha\in \mathcal{S}$ and $t>0$, we define $t\alpha$ to be the measured foliation $tF_{q_{\alpha,x}}$.
%This definition is independent of the choice of $x$.
%Set $\mathcal{WS}=\{t\alpha\in \mf\mid \alpha\in \mathcal{S},t>0\}$.
%By definition,
%\[
%t\alpha(\gamma)=t\,i(\alpha,\gamma).
%\]
%It is known that $\mathcal{WS}$ is dense in $\mf$ (cf. \cite{DH}).
%We define the geometric intersection number between $t\alpha$, $s\beta\in \mathcal{WS}\subset \mf$ by
%\[
%i(t\alpha,s\beta)=ts\,i(\alpha,\beta),
%\]
%where $i(\alpha,\beta)$ on the right-hand side is the original intersection number defined in \eqref{eq:geometric_intersection_scc}.
%Thurston showed that the intersection number on $\mathcal{WS}$ extends continuously to $\mf$.

\subsection{Teichmüller geodesic rays}
Let $x=(M,f)$ be a point in $\teich(\Sigma_g)$.
For a holomorphic quadratic differential $q$ on $M$, its associated Beltrami-Teichm\"{u}ler differentials is defined to be $\mu_t = t\bar q/|q|$ for $t=[0,1)$ .
Let $h_t \colon M \to N$ be a solution of the Beltrami equation $\frac{\partial f}{\partial \bar z}=\mu_t(z)$, for a Riemann surface $N$ homeomorphic to $\Sigma_g$.
If we set $x_t=(N, h_t \circ f)$, then we have $\displaystyle d_T(x, x_t)=\frac{1}{2}\log\frac{1+t}{1-t}$.
Using the measured foliation $F$ with $q_{F,x}=q$, we write the Teichmüler geodesic ray which the $x_t$ constitute by $\ray^F_x(t)$, and call it the Teichm\"{u}ller geodesic ray (issued at $x$) in the direction of $F$.
We note that $\ray^F_x$ depends only on the projective class $[F]$ by definition.

We denote the moduli space of $\Sigma_g$, the quotient space of $\teich(\Sigma_g)$ obtained by forgetting markings, by $\moduli(\Sigma_g)$.
Let $\varpi\colon \teich(\Sigma_g) \to \moduli(\Sigma_g)$ be the projection. 
Teichm\"uller distance on $\moduli(\Sigma_g)$ induces a metric on $\moduli(\Sigma_g)$, which we denote by $d_\moduli$.
To be more concrete, we define the metric $d_\moduli$ by
\begin{equation}
\label{eq:distance_modulispace}
d_{\moduli}(p,q)=\inf_{\varpi(x)=p, \varpi(y)=q}d_T(x,y).
\end{equation}
A Teichm\"{u}ller geodesic ray $\ray^F_x(t)$ is said to be {\em divergent} if $\varpi \circ \ray^F_x(t)$ leaves every compact subset of $\moduli(\Sigma_g)$.
% Let $M$ be a complete hyperbolc surface of genus $g$ with $m$ cusps.
% A \emph{geodesic lamination} is a closed subset of $M$ that is a disjoint union of complete geodesics on $M$.
% A \emph{transverse measure} to a geodesic lamination $L$ is an assignment of a Radon measures on each transverse arc $k$ to $L$ such that every homotopy sending $k$ to another transverse arc $k'$ while respecting $L$ sends the measure defined on $k$ to the measure defined on $k'$.
% A transerse measure of $L$ is said to be \emph{full supported} if any transverse arc $k$, the support of the measure assigned to $k$ is equal to $k\cap L$.
% A \emph{measured lamination} $\mu$ consists of a geodesic lamination $L_\mu$, called the \emph{support} of $\mu$, together with a full support transverse measure for $L_\mu$.
% For example, a closed geodesic homotopic to a simple closed curve on $M$ is a geodesic lamination.
% For a simple closed geodesic $\alpha$, we associate the transverse measure defined as an assignment the Dirac measure on $\alpha\cap k$ to each transverse arc $k$ to $\alpha$. Then, we think of any simple closed geodesic on $M$ as a measured lamination on $M$.

% Let $\ml=\ml(M)$ be the set of measured lamination with compact support. For $\mu$ 

\subsection{Extremal length}
We define the \emph{extremal length} of $F\in \mf$ on $x=(M,f)\in \teich(\Sigma_g)$ by
\[
\ext_{x}(F)=\int_M|q_{F,x}(z)|dxdy.
\]
The extremal length is continuous on $\teich(\Sigma_g)\times \mf$ and satisfies
\begin{equation}
\label{extreme}
e^{-2d_T(x,y)}\le \frac{\ext_y(F)}{\ext_x(F)}\le e^{2d_T(x,y)}
\end{equation}
for all $x,y\in\teich(\Sigma_g)$.
Kerckhoff showed in \cite{Ker} the following equality:
$$d_T(x,y)=\frac{1}{2}\sup_{F \in \mf}\log \frac{\ext_y(F)}{\ext_x(F)}.$$

%\subsection{Thurston compactification}
%By the Koebe uniformization theorem, any closed Riemann surface of genus $g\ge 2$ admits a unique complete hyperbolic structure of finite area.
%
%For $x=(M,f)\in \teich_{g}$ and $\alpha\in \mathcal{S}$, we denote by $\ell_x(\alpha)$ the length of the simple closed geodesic on $M$ homotopic to a curve in $f(\alpha)$.
%We topologize $\teich_{g}\cup \pmf$ as follows.
%A sequence $(x_n)_{n=1}^\infty$ in $\teich_{g}$ converges to $[F]\in \pmf$ if there is a sequence $(t_n)_{n=1}^\infty$ of positive numbers such that
%\[
%\lim_{n\to \infty}t_n\ell_{x_n}(\alpha)=F(\alpha)
%\]
%for all $\alpha\in \mathcal{S}$.
%Under this topology, Thurston showed that $\teich_{g}\cup \pmf$ is homeomorphic to a closed unit ball of dimension $6g-6+2m$, with $\teich_{g}$ corresponding to the interior of the ball.
%This compactification of $\teich_{g}$ is called the \emph{Thurston compactification} of $\teich_{g}$.

\subsection{Mapping class group}
The \emph{mapping class group} $\mcg(\Sigma_{g})$ is the group of isotopy classes of orientation-preserving self-diffeomorphisms of $\Sigma_{g}$.
For a diffeomorphism $\omega \colon \Sigma_g \to \Sigma_g$, we denote its mapping class by $[\omega]$.
For $[\omega]\in \mcg(\Sigma_{g})$, we define the action of $[\omega]$ on $\teich(\Sigma_g)$ by
$$
[\omega]_*(x)=(M,f\circ \omega^{-1})\quad (x=(M,f)\in \teich(\Sigma_g)).$$
% \\
%[\omega]_*(F)(\alpha)&=F(\omega^{-1}(\alpha)) \quad (F\in \mf,\ \alpha\in \mathcal{S}).
%\end{align*}
%The action of the mapping class group on $\mf$ induces a homeomorphic action on the Thurston compactification $\teich(\Sigma_g)\cup \pmf$.
We note that we have $\moduli_{g}=\teich_(\Sigma_g)/\mcg(\Sigma_{g})$.
The mapping class group $\mcg(\Sigma_g)$ also acts on $\mf$,  taking foliation by the map and pushing forward the transverse mesure.

\section{Classification of limit points}
We now define  conical limit points, horospherical limit points, and big horospherical limit points for subgroups of the mapping class group.
As mentioned in Introduction, we regard these limit points as lying in $\pmf$, which is identified with the set of Teichm\"{u}ller geodesic rays issued at a fixed point $x_0 \in \teich(\Sigma_g)$.

%\subsection{Limit points}
%We start by defining limit points.
%Let $G$ be a subgroup of $\mcg(\Sigma_g)$, and fix a point $x \in \teich(\Sigma_g)$.
%We call a projective measured foliation $[F] \in \pmf$ a limit point if there exists a sequence of distinct points $(g_i x)$ such that the Teichm\"{u}ller geodesic connecting $x$ to $g_i x$ converges the ray $\ray^x_F$ uniformly on any bounded interval.
%

\subsection{Conical limit points}
We first recall how conical limit points were defined for Kleinain groups.
For a Kleinian group $\Gamma$, a point $z$ in the sphere at infinity $S^2_\infty$ is said to be a conical limit point of $\Gamma$ if there are a geodesic ray $r$ in $\hyperbolic^3$ tending to $z$, a positive number $\epsilon $, and $x \in \hyperbolic^3$ such that the $\epsilon$-neighbourhood of $r$ contains infinitely many distinct points in $\Gamma x$.

This notion of conical limit point was adapted to the case of the action of a subgroup of $\mcg(\Sigma_g)$ on the Teichm\"{u}ller space $\teich(\Sigma_g)$ by  Leininger and Kent in \cite{LeiKen}.
We here define conical limit points as follows.
We do not assume that conicial limit points are limit points, which, by definition, lie on the Thurston boundary.
Still, as we shall see, our definition is essentially the same as that of Leininger-Kent.

\begin{definition}
Let $G$ be a subgroup of $\mcg(\Sigma_g)$.
We call $[F]\in \pmf$ a \emph{conical limit point} of $G$ if, for any Teichm\"uller geodesic ray $\ray^F_{x_0}$ in the direction of $[F]$, there is a positive number $R$ such that some $G$-orbit intersects the $R$-neighbourhood of $\ray^F_{x_0}$ in an infinite set.
We denote by $\Lambda_C(G)$ the set of conical limit points of $G$.
\end{definition}

\begin{lemma}
\label{Masur}
If $[F]$ is a conical limit point of $G$, then $F$ is minimal and uniquely ergodic.
\end{lemma}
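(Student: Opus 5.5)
The idea is to reduce the statement to Masur's criterion: if the projection $\varpi\circ\ray^F_{x_0}$ to $\moduli(\Sigma_g)$ does not leave every compact set, i.e.\ the ray is recurrent, then the vertical foliation $F$ is uniquely ergodic. So I would show that a conical limit point gives a recurrent ray, apply Masur's criterion to get unique ergodicity, and then show separately that $F$ is minimal.

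\emph{Step 1 (recurrence).} Suppose $[F]\in\Lambda_C(G)$. Then there are $R>0$, a point $y\in\teich(\Sigma_g)$ and infinitely many distinct $g_n\in G$ with $d_T(g_ny,\ray^F_{x_0}(t_n))\le R$ for some $t_n\ge 0$. The orbit $Gy$ is discrete, since $\mcg(\Sigma_g)$ acts properly discontinuously. Hence only finitely many of the $g_ny$ lie in any bounded set, and so $t_n\to\infty$. In moduli space $d_\moduli(\varpi(y),\varpi(\ray^F_{x_0}(t_n)))\le R$ for all $n$. Thus the projected ray returns infinitely often to the closed $R$-ball about $\varpi(y)$, which is compact because $d_\moduli$ is a proper metric. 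So the ray is not divergent.

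\emph{Step 2 (unique ergodicity).} Masur's criterion (Masur, \emph{Duke} 1992) states that if $F$ is not uniquely ergodic then $\ray^F_{x_0}$ is divergent. We only have recurrence along a sequence $t_n\to\infty$. This is exactly the hypothesis of Masur's theorem: if $\varpi(\ray^F_{x_0}(t))$ has an accumulation point in $\moduli(\Sigma_g)$ as $t\to\infty$, then the vertical foliation is uniquely ergodic. This gives unique ergodicity. Here I also use that $\ray^F_{x_0}$ is the ray whose quadratic differential $q_{F,x_0}$ has vertical foliation $F$.

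\emph{Step 3 (minimality).} Suppose $F$ is not minimal. Its minimal supporting surface $S$ is then a proper incompressible subsurface, or $F$ has a closed-leaf component. In either case there is an essential simple closed curve $\gamma$ with $I(F,\gamma)=F(\gamma)=0$: take a boundary curve of $S$, or a core curve of a cylinder. Along the ray the horizontal measure is expanded by $e^{t}$ and the vertical one contracted by $e^{-t}$, with normalisations as in the paper. Since $\gamma$ has zero $F$-measure, its $q_t$-length is eventually realised along the contracting direction, so its flat length tends to $0$. For the cylinder case I would argue via the modulus of the cylinder, which grows like $e^{2t}$; in the subsurface case I would use $\ext_{\ray(t)}(\gamma)\lesssim \ell_{q_t}(\gamma)^2/\text{area}$, which is bounded by a constant times $e^{-2t}$. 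Either way $\ext_{\ray^F_{x_0}(t)}(\gamma)\to 0$, and by the Mumford compactness criterion the ray diverges, contradicting Step 1.

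I expect Step 3 to be the main obstacle. One must be careful when $\gamma$ is the boundary of a minimal component, where the flat geodesic representative of $\gamma$ may consist of saddle connections. Every saddle connection on $\gamma$'s geodesic representative has zero transverse $F$-measure and so is a vertical leaf segment. Its flat length therefore decays like $e^{-t}$, which gives the bound $\ell_{q_t}(\gamma)\le Ce^{-t}$ and hence the extremal length estimate. Note that a uniquely ergodic $F$ has a single ergodic component; if that component is supported on a proper subsurface, the argument above still applies. So minimality must be proved by this separate argument and does not follow from Masur's theorem.
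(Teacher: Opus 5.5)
Your outline matches the paper's, and most of it is sound, but the subsurface case of Step 3 has a gap. The shared structure is this. Conicality keeps the projected ray returning to a compact set; your Step 1 spells out what the paper takes for granted in ``divergent, and hence cannot be a conical limit point''. Masur's criterion then gives unique ergodicity, and non-minimality is ruled out by a curve $\gamma$ of zero $F$-measure whose flat length tends to $0$.

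Steps 1 and 2 are correct, and so is the cylinder case. Your observation that the $q_0$-geodesic representative of $\gamma$ is a loop of vertical saddle connections, so that $\ell_{q_t}(\gamma)\le e^{-t}\ell_{q_0}(\gamma)$, is also correct.

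The gap is the passage from this estimate to divergence when $F$ has no closed leaf. The inequality $\ext_{\ray^F_{x_0}(t)}(\gamma)\lesssim \ell_{q_t}(\gamma)^2/\mathrm{Area}(q_t)$ is backwards. Extremal length is a supremum over conformal metrics, so the metric $|q_t|^{1/2}$ only gives the lower bound $\ext_{\ray^F_{x_0}(t)}(\gamma)\ge \ell_{q_t}(\gamma)^2/\|q_t\|$. A short loop of saddle connections does not by itself produce an annulus of large modulus around $\gamma$, and the rate you claim is false. For instance, let $F$ restricted to $S$ be the stable foliation of a pseudo-Anosov map of $S$. Then there is no flat cylinder around $\partial S$, only an expanding annulus of modulus comparable to $t$, so $\ext(\partial S)$ decays like $1/t$, not like $e^{-2t}$. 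Moreover, non-peripheral curves of $\Sigma_g\setminus S$ also have zero $F$-measure and flat length $O(e^{-t})$, yet they need not become short at all.

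You do not need $\gamma$ itself to become short. You only need to contradict Step 1, and the flat-length estimate already does this. For a compact $K\subset\moduli(\Sigma_g)$, a standard compactness argument over the unit-norm quadratic differentials on surfaces lying over $K$ shows the flat systole is bounded below. That is, there is $c_K>0$ with $\ell_q(\alpha)\ge c_K$ for every such $q$ and every essential closed curve $\alpha$. Along the ray $\|q_t\|=\|q_0\|$ while $\ell_{q_t}(\gamma)\to 0$, so the points $\varpi(\ray^F_{x_0}(t_n))$ cannot all lie in the compact $R$-ball of Step 1.

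This is also the unstated justification behind the paper's one-line step ``the Euclidean length of $C$ also goes to $0$. This means that $\ray^F_{x_0}(t)$ is divergent.'' With this replacement, your proof coincides with the paper's.
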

\begin{proof}
By Masur's criterion \cite{Mas},  if $F$ is minimal but not uniquely ergodic, then the Teichm\"{u}ller geodesic $\ray^F_{x_0}$ is divergent, and hence cannot be a conical limit point.

Suppose that $F$ is not minimal.
Then, as explained in \Cref{mf}, $F$ contains a possibly singular compact leaf $l$, and
a boundary component $C$ of the minimal supporting surface of $F$ is homotopic into $l$.
Let $q(t)$ be the unit-area quadratic differential corresponding to the positive tangent vector of $\ray^F_{x_0}(t)$.
Since the Euclidean length of any compact leaf with respect to the flat structure determined by $\sqrt{q(t)}$ goes to $0$, the Euclidean length of $C$ also goes to $0$.
This means that $\ray^F_{x_0}(t)$ is divergent, and hence $F$ cannot be a conical limit point.
\end{proof}

% To check the equivalence between their definition and ours, we prove the following proposition.
%
 Our definition of conical limit points may look different from that of Leininger-Kent at first glance, for we do not assume $[F]$ to be a limit point in the sense of McCarthy-Papadopoulos \cite{McPa}.
 Still, as we shall see below, if $[F]$ is a conical limit point in our definition, it can be regarded as a point in the Thurston boundary and  is a limit point, and hence it is a conical limit point in the sense of Leininger-Kent.
%In their definition, conical limit points of $G$ are assumed to be contained in $\Lambda(G)$ whereas they are not in ours.
%In fact, we can show that conical limit points of our definition are also automatically contained in $\Lambda(G)$ as follows.

\begin{lemma}
For any conical limit point $[F] \in \Lambda_C(G)$, any orbit of $G$ converges to $[F]$ in the Thurston compactification of $\teich(S)$.
\end{lemma}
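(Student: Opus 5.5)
We will show that for every $y\in\teich(\Sigma_g)$ there is a sequence of distinct elements $g_i\in G$ with $g_i y\to[F]$ in the Thurston compactification. The idea is to move the orbit points that witness conicality along the ray, and then show that any sequence staying within bounded distance of the ray has the same Thurston limit as the ray. First, by \Cref{Masur}, $F$ is minimal and uniquely ergodic. Masur's theorem then says that the ray $\ray^F_{x_0}(t)$ converges to $[F]$ in the Thurston compactification as $t\to\infty$.

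\textbf{Step 1 (witnesses along the ray).} By definition there are $R>0$, a point $x$, and infinitely many distinct $g_i\in G$ with $d_T(g_i x,\ray^F_{x_0}(t_i))\le R$ for suitable $t_i\ge 0$. The action of $\mcg(\Sigma_g)$ on $\teich(\Sigma_g)$ is properly discontinuous, so a bounded set contains only finitely many orbit points. Hence, after passing to a subsequence, $t_i\to\infty$. For an arbitrary $y$, the triangle inequality and the fact that $g_i$ is an isometry give
\[
d_T(g_i y,\ray^F_{x_0}(t_i))\le R+d_T(x,y)=:D .
\]
So it suffices to prove the following claim: if $d_T(z_i,\ray^F_{x_0}(t_i))\le D$ with $t_i\to\infty$, then $z_i\to[F]$.

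\textbf{Step 2 (comparison of length functions).} Write $r_i=\ray^F_{x_0}(t_i)$. By Wolpert's lemma, a $K$-quasiconformal map distorts hyperbolic lengths of closed geodesics by at most a factor $K$. Therefore
\[
e^{-2D}\ell_{r_i}(\alpha)\le \ell_{z_i}(\alpha)\le e^{2D}\ell_{r_i}(\alpha)\quad\text{for all }\alpha\in\mathcal S .
\]
Since $r_i\to[F]$, there are constants $c_i\to 0$ with $c_i\ell_{r_i}(\alpha)\to F(\alpha)$ for every $\alpha$. The functions $c_i\ell_{z_i}$ are then pinched between $e^{\mp 2D}$ times a convergent sequence. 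Because the Thurston compactification is compact, after passing to a subsequence we get $c_i\ell_{z_i}(\cdot)\to G(\cdot)$ for some $G\in\mf$, with
\[
e^{-2D}F(\alpha)\le G(\alpha)\le e^{2D}F(\alpha)\quad\text{for all }\alpha\in\mathcal S.
\]
The lower bound shows $G\neq 0$, so $z_i\to[G]$ along this subsequence.

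\textbf{Step 3 (identifying the limit).} This is the step I expect to be the main difficulty: showing $[G]=[F]$. Weighted simple closed curves are dense in $\mf$, and $I(\cdot,\cdot)$ is continuous. So the inequality $I(G,\mu)\le e^{2D}I(F,\mu)$ extends from curves to all $\mu\in\mf$. Taking $\mu=F$ gives $I(G,F)\le e^{2D}I(F,F)=0$, hence $I(G,F)=0$. For $F$ minimal and uniquely ergodic, $I(G,F)=0$ forces $G$ to be topologically equivalent to $F$, and unique ergodicity then gives $G=cF$ for some $c>0$. Thus $[G]=[F]$.

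Every subsequence of $(z_i)$ therefore has a further subsequence converging to $[F]$, so $z_i\to[F]$. Applying the claim to $z_i=g_i y$ shows that the orbit $Gy$ accumulates at $[F]$, which proves the lemma.
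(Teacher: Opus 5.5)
Your proof is correct, but it takes a different route from the paper's.

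\textbf{The paper's route.} The paper fixes $x$ and lets $F_n$ be the direction of the Teichm\"uller geodesic from $x$ to $[\omega_n]_*(x)$. It combines Minsky's inequality $I(F_n,F)^2\le \ext_{y_n}(F)\ext_{y_n}(F_n)$ with the extremal-length distortion bound to get $I(F_n,F)\to 0$. Minimality and unique ergodicity then give $[F_n]\to[F]$. Finally it invokes Masur's ``two boundaries'' theorem in its strong form: at uniquely ergodic points, the identity of $\teich(\Sigma_g)$ extends continuously from the ray compactification to the Thurston compactification. So points going to infinity along geodesics whose directions converge to $[F]$ must converge to $[F]$.

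\textbf{Your route.} You use only the weaker fact that the single ray $\ray^F_{x_0}$ lands at $[F]$. You transfer this to the orbit points by Wolpert's lemma, and you identify the limit through $I(G,F)\le e^{2D}I(F,F)=0$. Both arguments end with the same fact: for minimal uniquely ergodic $F$ and $G\neq 0$, $I(G,F)=0$ forces $[G]=[F]$.

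\textbf{What each buys.} Your version needs less from Masur. It also yields a reusable statement: any sequence that stays within bounded distance of a uniquely ergodic ray while going out along it converges to the ray's endpoint. And it makes explicit, via your triangle-inequality step, the passage from the orbit witnessing conicality to an arbitrary orbit. The paper passes over that step silently, and its extremal-length estimate mixes the base points $x$ and $x_0$. The paper's version stays entirely within extremal length and gives the extra information that the directions $[F_n]$ converge to $[F]$.

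\textbf{One point to tighten in Step 2.} Compactness of the Thurston compactification only gives some constants $c_i'$, not necessarily your $c_i$, with $c_i'\ell_{z_i}\to I(G',\cdot)$ along a subsequence. This is easily repaired:
\begin{itemize}
\item Since $F$ is minimal, $F(\alpha)>0$ for every $\alpha$, so $c_i\ell_{z_i}(\alpha)$ is eventually bounded above and below by positive constants.
\item Choosing $\alpha$ with $I(G',\alpha)>0$ then shows that $c_i/c_i'$ stays in a compact subset of $(0,\infty)$.
\item Pass to a further subsequence with $c_i/c_i'\to\lambda$ and set $G=\lambda G'$.
\end{itemize}
With this remark your argument is complete.
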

\begin{proof}
Fix a point $x$ in $\teich(\Sigma_g)$, and let $([\omega_n])$ be a sequence of distinct elements in $G$ and  such that every $[\omega_n]_*(x)$ is contained in the $R$-neighbourhood of a Teichm\"uller geodesic ray $\ray^F_{x_0}$ with direction $[F]\in \Lambda_C(G)$.
Take a point $y_n$ on $\ray^F_{x_0}$ which lies within the distance $R$ from $[\omega_n]_*(x)$.
Let $\gamma_n$ be the Teichm\"{u}ller geodesic connecting $x$ with $[\omega_n]_*(x)$, and $l_n$ its length.
Passing to a subsequence, we can assume that $l_n$ goes to $\infty$.
Then $d_T(x, y_n) \geq d_T(x, [\omega_n]_*(x))-R=l_n-R$.

We denote by $[F_n]$ the projective measured foliation directing $\gamma_n$.
We can assume that  $F$ and $F_n$ have extremal lengths $1$ at $x$ without changing their projective classes.
Then by Minsky's inequality \cite{Min},
\begin{equation}
\begin{split}
I(F_n,F)^2&\le \ext_{y_n}(F)\ext_{y_n}(F_n)\\
&\le e^{2R-2l_n}\ext_{x}(F)e^{2R}\ext_{[\omega_n]_*(x)}(F_n)\\
&\le e^{2R-2l_n}\ext_{x}(F)e^{2R-2l_n}\ext_{x}(F_n)\\
&=e^{4R-4l_n}\to 0.
\end{split}
\end{equation}
Since we  $F$ is minimal and uniquely ergodic by \Cref{Masur}, we have $[F]=\lim_{n\to\infty}[F_n]$.
By Masur's theorem \cite{Mas2}, this implies that $([\omega_n]_*(x))$ converges to $[F]$ in the Thurston compactification.
\end{proof}
Thus we have shown that $[F]$ is a limit point of $G$ in the sense of McCarthy-Papadopoulos.
%
%
%Take a sequence $([\omega_n])$ consisting of distinct elements in $G$ and $x_1\in \teich_{g}$ such that each $(\omega_n)_*(x_1)$ is contained in an $R$-neighbourhood of a Teichm\"uller geodesic ray $\ray^F_{x_2}$ with direction $[F]\in \Lambda_C(G)$.
%By passing to a subsequence if necessary, we may assume that $[\omega_n]_*(x_1)$ converges to some point $p$ in the Gardiner--Masur compactification as $n\to \infty$ (cf.\cite{GaMa}).
%Take $t_n>0$ such that $d_T([\omega_n]_*(x_1),\ray^F_{x_2}(t_n))\le R$.
%We may assume that $t_n\to \infty$ as $n\to \infty$.
%Then
%\begin{align*}
%e^{-2d_T(x_2,[\omega_n]_*(x_1))}\ext_{[\omega_n]_*(x_1)}(F)
%&\le e^{2R}e^{-2d_T(x_2,\ray^F_{x_2}(t_n))}\ext_{\ray^F_{x_2}(t_n)}(F) \\
%&=e^{2(R-t_n)}\to 0
%\end{align*}
%as $n\to \infty$.
%By \cite[Theorem 3]{Miy2}, the limit point $p$ coincides with $[F]$.
%Since $[F]$ is uniquely ergodic, $[\omega_n]_*(x_1)$ converges to $[F]$ in the Thurston compactification (cf. \cite[Corollary 1]{Miy2}).
%Therefore, by \cite[Proposition 8.1]{McPa}, there is $[F']\in \Lambda(G)$ such that $i(F,F')=0$.
%Since $[F]$ is uniquely ergodic, $[F]=[F']\in \Lambda(G)$.

\subsection{Horospherical limit points}
For Kleinian groups, the  notions of horospherical limit point and  horospherical limit set were introduced by Sullivan \cite{Su}.
Given a Kleinian group $\Gamma$, a point $z \in S^2_\infty$ is said to be a horospherical limit point of $\Gamma$ if there is a point $x \in \hyperbolic^3$ such that any horoball centred at $z$ has infinitely many points in $\Gamma x$.
Tukia considered a generalisation of the horospherical limit set, which he called big horospherical limit set.
A point $z\in S^2_\infty$ is said to be a big horospherical limit point of $\Gamma$ is there is some horoball centred at $z$ containing infinitely many points on $\Gamma x$. 

We shall adapt these notions to subgroups of $\mcg(\Sigma_g)$ acting on $\teich(\Sigma_g)$.
Recall that for a geodesic ray $r \colon [0,\infty) \to (X,d)$ in a length space $(X,d)$, the Busemann function is defined to be $\busemann_r(x)=\lim_{t\to\infty} (d(r(t),x)-t)$.
In the case of hyperbolic space $\hyperbolic^3$, for a point $z \in S^2_\infty$, a horoball centred at $z$ coincides with the set of points $\{x \mid \busemann_r(x)= K\}$ for a geodesic ray tending to $z$ and a constant $K$.
The same holds for the hyperbolic space of any dimension, and more generally, for a Gromov hyperbolic space, its horoballs are defined to be level sets of its Busemann function. 
Therefore using the Busemann function $\busemann_r$, a horospherical limit point  for a discontinuous group $\Gamma$ acting on a Gromov hyperbolic space  is defined to be a point at infinity $z$ such that for any $\epsilon >0$, the orbit $\Gamma x$ contains infinitely many points whose Busemann function is less than $\epsilon$.

Although Teichm\"{u}ller space with Teichm\"{u}ller metric is not Gromov hyperbolic, we can define its horospherical limit in a similar way.
%, using the level sets of the extremal length of a measured foliation whose projective class corresponds to a point in consideration on the boundary at infinity provided that the measured foliation is uniquely ergodic.
The viewpoint regarding level surfaces of extremal lengths as horospheres was already mentioned in Gardiner-Masur \cite{GaMa}, but  was justified using Busemann's function in Su-Tan \cite{ST} based on the work of Walsh \cite{Walsh}.
Let us first recall the definition of Busemann's function for Teichm\"{u}ller geodesic rays.

\begin{definition}
For a projective foliation $[F]\in \pmf$ and a basepoint $x_0\in \teich(\Sigma_g)$ we define Busumann's function $\busemann_F^{x_0}$ by
\begin{equation}
\label{eq:Busemann}
\busemann_F^{x_0}(x)=
\lim_{t\to \infty}(d_T(x,\ray^{F}_{x_0}(t))-t)
\end{equation}
for $x\in \teich(\Sigma_g)$.
It is well known that the limit is well defined (including $-\infty$) and that the convergence is locally uniform.
\end{definition}

For Teichm\"{u}ller space with the Teichm\"{u}ller metric and a subgroup $G$ of the mapping class group $\mcg(\Sigma_g)$, we define the horospherical limit points of $G$ as follows.
Recall that in the case of (Gromov) hyperbolic space, the horospheres tangent to a point $z$ on the sphere at infinity are the level sets of Busemann's function at $z$, and therefore, a point at infinity is a horospherical limit point if and only if the associated Busemann's function  tends to $-\infty$.
This shows that the following definition is a natural generalisation of horospherical limit points for (Gromov) hyperbolic spaces to our setting.

\begin{definition}
Fix a point $x_0\in \teich(\Sigma_g)$.
We say that $[F]\in \pmf$ is a  \emph{horospherical limit point} of $G$ if there is a sequence $([\omega_n])_{n=1}^\infty$ consisting of distinct elements in $G$ such that
\[
\busemann_F^{x_0}([\omega_n](x))\to -\infty
\]
as $n\to \infty$ for some (and hence any) $x \in \teich(\Sigma_g)$.
We denote by $\Lambda_h(G)$ the set of horospherical limit points of $G$.

We say that $[F]\in \pmf$ is a {\em big horospherical limit point} of $G$ if there is a sequence  $([\omega_n])_{n=1}^\infty$ consisting of distinct elements in $G$ such that $(\busemann_F^{x_0}([\omega_n](x)))$ is bounded from above.
\end{definition}

Although the definition appears to depend on the basepoint $x_0$, in fact, as we shall see next, it is independent of the choice of the basepoint.

\begin{lemma}
Let $x_0$ and $x_1$ be two points in $\teich(\Sigma_g)$.
Then there exists $D_0$ such that $|\busemann_F^{x_0}(x)-\busemann_F^{x_1}(x)|
\le d_T(x_0,x_1)+D_0$ for any $x \in \teich(\Sigma_g)$.
\end{lemma}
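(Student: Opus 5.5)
The natural route is to compare the two rays $\ray^F_{x_0}$ and $\ray^F_{x_1}$ directly, and to use only the triangle inequality for the Busemann functions. Suppose we can find a constant $D_0$, depending on $x_0$, $x_1$ and $[F]$ but not on $t$, such that
\[
d_T\bigl(\ray^F_{x_0}(t),\ray^F_{x_1}(t)\bigr)\le d_T(x_0,x_1)+D_0 \quad\text{for all sufficiently large } t .
\]
Then for every $x\in\teich(\Sigma_g)$ we have
\[
d_T(x,\ray^F_{x_0}(t))-t\le d_T(x,\ray^F_{x_1}(t))-t+d_T(x_0,x_1)+D_0 .
\]
Letting $t\to\infty$ and using the existence of the limits in \eqref{eq:Busemann} gives $\busemann_F^{x_0}(x)\le\busemann_F^{x_1}(x)+d_T(x_0,x_1)+D_0$. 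The symmetric inequality follows in the same way, with the convention that both sides may equal $-\infty$ simultaneously. So the lemma reduces to this uniform bound on the distance between the two rays.

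To prove the bound, we write $F=G_1+\dots+G_k$ (ergodic decomposition) and use Kerckhoff's formula: $d_T(\ray^F_{x_0}(t),\ray^F_{x_1}(t))$ equals $\frac12\sup_{H}\log\bigl(\ext_{\ray^F_{x_1}(t)}(H)/\ext_{\ray^F_{x_0}(t)}(H)\bigr)$.

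Both rays have vertical foliation projectively equal to $F$. At time $t$, the quadratic differential along $\ray^F_{x_i}$ has vertical foliation $e^{t}c_iF$ and horizontal foliation $e^{-t}H_i$. Here $c_i$ is a normalisation, with $c_1/c_0$ controlled by \eqref{extreme} in terms of $e^{\pm d_T(x_0,x_1)}$, and $H_i$ is the horizontal foliation at $x_i$. Following Ivanov's argument (and the Lenzhen--Masur analysis of rays sharing a vertical foliation), we estimate the extremal length of any $H$ on $\ray^F_{x_i}(t)$ up to a bounded multiplicative constant by quantities of the form $e^{2t}I(H,e^{-t}H_i)^2+e^{-2t}I(H,c_iF)^2$, summed over the pieces of the ergodic decomposition and the thick components. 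The ratio of such expressions for $i=0,1$ is then bounded by $e^{2d_T(x_0,x_1)}$ times a constant $e^{2D_0}$ coming from the comparison constants. Alternatively, one can invoke Walsh's explicit formula for the horofunction of a Teichm\"uller ray. That formula expresses $\busemann_F^{x_0}(x)$ as $\frac12\log\sup_{H}\bigl(\sum_j I(G_j,H)^2/(\alpha_j\,\ext_x(H))\bigr)$ up to an additive constant, where the weights $\alpha_j$ depend on $x_0$ only through the $G_j$ and their extremal lengths at $x_0$. Comparing these weights for $x_0$ and $x_1$ via \eqref{extreme} bounds the difference of Busemann functions directly.

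The main obstacle is the non-uniquely ergodic (or non-minimal) case. There the two rays need not be asymptotic, so we cannot hope for $D_0=0$. We must check that the mutual distance stays bounded uniformly in $t$, rather than merely along a subsequence, and that the constant depends only on $x_0$, $x_1$ and the decomposition of $F$. I would first try the Walsh-formula route, since it turns the problem into a comparison of the finitely many ergodic weights, where \eqref{extreme} applies term by term.
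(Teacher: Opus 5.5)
Your outline is correct, and its skeleton is the paper's. The paper takes the uniform bound $d_T(\ray^F_{x_0}(t),\ray^F_{x_1}(t))\le D_0$ for all $t\ge 0$ directly from Ivanov \cite[Theorem 3.2]{Ivanov}, with $D_0$ depending only on $F$, $x_0$, $x_1$ and no restriction on $[F]$. It then runs exactly your triangle-inequality argument. So the step you call the main obstacle is the content of that theorem, and quoting it ends the proof.

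Do not replace the citation by your own sketch, however. Your displayed expression attaches the exponential factors to the wrong terms; the relevant quantities are $e^{t}c_iI(H,F)$ and $e^{-t}I(H,H_i)$. Even after correction it only computes the squared $q_t$-length up to a factor $2$. That bounds $\ext_{\ray^F_{x_i}(t)}(H)$ from below, but not from above once the ray enters the thin part, where collar and twisting contributions dominate. Kerckhoff's formula needs two-sided control uniform in $H$ and $t$, and that is precisely what the sketch does not supply. (Minor: $\busemann_F^{x_0}(x)\ge -d_T(x,x_0)$, so no $-\infty$ convention is needed.)

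The Walsh route does give a valid, genuinely different proof, but not for the reason you state. The weights in Walsh's formula are $I(G_j,H(q_{F,x_0}))$, with $F$ scaled so that $\ext_{x_0}(F)=1$; this is the $|q_{F,x_0}|$-area carried by the $j$th component. They are not extremal lengths of the $G_j$, so \eqref{extreme} says nothing about them.

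What makes the argument work is simply that they are finitely many positive numbers, positive since $I(G_j,F)=0$ while $F$ and $H(q_{F,x_0})$ fill. Fix the $G_j$. Write $\beta^{(i)}_j$ for the coefficient of $I(G_j,\eta)^2$ in Walsh's formula based at $x_i$, and set $\kappa_j=\beta^{(1)}_j/\beta^{(0)}_j$. Then the two suprema over $\eta$ differ by a factor in $[\min_j\kappa_j,\max_j\kappa_j]$. Hence $|\busemann_F^{x_0}(x)-\busemann_F^{x_1}(x)|\le\frac12\max_j|\log\kappa_j|$ for all $x$.

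This route bypasses any comparison of the two rays and yields an explicit constant, without even needing the $d_T(x_0,x_1)$ term. The price is dependence on Walsh's horofunction formula, which the paper itself uses only later, in the proof of Theorem~\ref{thm:main1}(1). The paper's route instead gives the stronger geometric statement that the rays stay boundedly close, and needs no formula for Busemann functions at all.
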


\begin{proof}
By Ivanov \cite[Theorem 3.2]{Ivanov}, there is $D_0$ depending only on $F$, $x_0$ and $x_1$ such that
\[
d_T(\ray^F_{x_0}(t),\ray^F_{x_1}(t))\le D_0
\]
for all $t\ge 0$. 
Therefore, for all $x\in \teich(\Sigma_g)$,
\begin{equation}
\label{eq:Busemann-inequality2}
|(d_T(x,\ray^F_{x_0}(t))-t)-
(d_T(x,\ray^F_{x_1}(t))-t)|
\le d_T(\ray^F_{x_0}(t),\ray^F_{x_1}(t))\le D_0,
\end{equation}
which implies our claim by the definition of Busemann's function.
\end{proof}

Since the Busemann function tends to $-\infty$ along the Teichm\"uller ray in the direction of $F$, we have $\Lambda_C(G)\subset \Lambda_h(G)$.

\begin{proposition}
\label{prop:horospherical_limit_points2}
Let $G$ be a subgroup of $\mcg(\Sigma_{g})$, and $x_0=(M_0,f_0)$ a point in $\teich(\Sigma_{g})$.
For a minimal and uniquely ergodic projective measured foliation $[F]\in \pmf$, the following conditions are equivalent.
\begin{itemize}
    \item[(a)] The projective measured foliation $[F]$ is a horospherical limit point.
    \item[(b)] There exist sequences $(t_n)$ and $(D_n)$ in $[0,\infty)$ with $t_n\nearrow \infty$ and $t_n-D_n\to \infty$, a sequence $([\omega_n])$ in $G$, and $e^{2D_n}$-quasi-conformal maps $h_n\colon M_0\to M_{t_n}$ such that $f_{t_n}$ is homotopic to $h_n\circ f_0\circ \omega_n^{-1}$ for all $n$, where $(M_{t_n},f_{t_n})$ denotes $\ray^{F}_{x_0}(t_n)$.
\end{itemize}
\end{proposition}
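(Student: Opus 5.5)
The plan is to translate condition (b) into a distance statement and then use only two properties of $\busemann_F^{x_0}$: its definition as a limit, and that it is $1$-Lipschitz. Write $x_n=[\omega_n]_*(x_0)=(M_0,f_0\circ\omega_n^{-1})$. By the definition of $d_T$, an $e^{2D_n}$-quasi-conformal map $h_n\colon M_0\to M_{t_n}$ with $h_n\circ f_0\circ\omega_n^{-1}\simeq f_{t_n}$ exists if and only if $d_T(x_n,\ray^F_{x_0}(t_n))\le D_n$. For the direction ``only if'' we need the infimum in the definition of $d_T$ to be attained, which is Teichm\"uller's existence theorem for extremal maps. So (b) says exactly this: there are $t_n\nearrow\infty$ and group elements $[\omega_n]$ such that $x_n$ lies within $D_n$ of $\ray^F_{x_0}(t_n)$, with $t_n-D_n\to\infty$.

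For (b)$\Rightarrow$(a), I would first note that $\busemann_F^{x_0}$ is $1$-Lipschitz, being a limit of the $1$-Lipschitz functions $x\mapsto d_T(x,\ray^F_{x_0}(t))-t$. Next, on the ray itself, $\busemann_F^{x_0}(\ray^F_{x_0}(s))=\lim_t\bigl((t-s)-t\bigr)=-s$, since $\ray^F_{x_0}$ is a geodesic. Combining these,
\[
\busemann_F^{x_0}(x_n)\le -t_n+d_T(x_n,\ray^F_{x_0}(t_n))\le -(t_n-D_n)\to-\infty .
\]
The definition of a horospherical limit point also requires the $[\omega_n]$ to be distinct. Since $\busemann_F^{x_0}$ is finite-valued on any fixed point, each single element of $G$ can occur only finitely many times in a sequence along which $\busemann_F^{x_0}(x_n)\to-\infty$. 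Hence a subsequence consists of distinct elements and still satisfies the condition. The basepoint-independence lemma lets us take $x=x_0$.

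For (a)$\Rightarrow$(b), take distinct $[\omega_n]$ with $\busemann_F^{x_0}(x_n)\to-\infty$. The limit defining $\busemann_F^{x_0}(x_n)$ exists, possibly equal to $-\infty$. So for each $n$ the quantity $d_T(x_n,\ray^F_{x_0}(t))-t$ is eventually at most $\max(\busemann_F^{x_0}(x_n),-n)+1$. I would choose $t_n$ inductively with $t_n>\max(t_{n-1},n)$ and large enough for this bound to hold. Then set $D_n=d_T(x_n,\ray^F_{x_0}(t_n))$, so that
\[
t_n-D_n\ge -\max(\busemann_F^{x_0}(x_n),-n)-1\to\infty .
\]
Finally, Teichm\"uller's theorem supplies an extremal $e^{2D_n}$-quasi-conformal $h_n$ in the required homotopy class. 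If $D_n=0$, the map $h_n$ is conformal, which is still allowed.

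Neither direction needs the minimality or unique ergodicity hypothesis, because the argument uses only the definition of the Busemann function. The main point needing care is the bookkeeping in (a)$\Rightarrow$(b): making $t_n$ strictly increasing to $\infty$ while keeping $t_n-D_n\to\infty$, and handling the case $\busemann_F^{x_0}(x_n)=-\infty$. Both are dealt with by the truncation $\max(\cdot,-n)$ and the inductive choice of $t_n$.
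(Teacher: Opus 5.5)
Your proof is correct. For (a)$\Rightarrow$(b) it is the paper's argument: pick $t_n$ with $d_T([\omega_n]_*(x_0),\ray^F_{x_0}(t_n))-t_n\to-\infty$, let $D_n$ be that distance, and take the Teichm\"uller map. You just make the choice of $t_n$ explicit.

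For (b)$\Rightarrow$(a) you take a different route. The paper goes through extremal length. From \eqref{extreme} it gets $\ext_{[\omega_n]_*(x_0)}(F)/\ext_{x_0}(F)\le e^{2D_n-2t_n}\to 0$. It then uses Walsh's formula $\busemann_F^{x_0}(x)=\frac{1}{2}\log\bigl(\ext_x(F)/\ext_{x_0}(F)\bigr)$, and this is exactly where minimality and unique ergodicity are needed. You use only the triangle inequality, $\busemann_F^{x_0}(x_n)\le d_T(x_n,\ray^F_{x_0}(t_n))-t_n\le D_n-t_n$. So your implication holds for every $[F]\in\pmf$, and indeed for any geodesic ray in any metric space.

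This generality matters. In the proof of part (2) of \Cref{thm:main1}, the paper applies (b)$\Rightarrow$(a) to foliations that are non-minimal or non-uniquely ergodic. That is outside the proposition's stated hypothesis, so your argument, not the paper's, is what actually covers that use. What the paper's route adds is an identification at uniquely ergodic points: horoballs become sublevel sets of $\ext_{\cdot}(F)$, the Gardiner--Masur viewpoint behind the definition. You also pass to a subsequence of distinct $[\omega_n]$, a step the paper's proof omits.

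Some small slips:
\begin{enumerate}
\item Attainment of the infimum (Teichm\"uller's theorem) is needed for the ``if'' half of your equivalence, where $d_T\le D_n$ implies that a map exists, not the ``only if'' half.
\item The case $\busemann_F^{x_0}(x_n)=-\infty$ never occurs, because $d_T(x,\ray^F_{x_0}(t))-t\ge -d_T(x,x_0)$. This same bound is what justifies the finiteness you use in the distinctness step.
\item The freedom to take $x=x_0$ in the definition comes from the $1$-Lipschitz property together with the isometric action of $G$. It does not come from the basepoint lemma, which compares rays issued from different basepoints.
\end{enumerate}
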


\begin{proof}
Suppose first that (a) holds. 
%By Walsh \cite{Walsh}, $\busemann_F^{x_0}(x)=\frac{1}{2}(\log \ext_x(F)-\log\ext_{x_0}(F))$ since $F$ is assumed to be uniquely ergodic.
%Therefore, there is a sequence $(\omega_n)$ in $G$ such that
%\begin{equation}
%\label{tends to 0}
%\frac{\ext_{[\omega_n]_*(x_0)}(F)}{\ext_{x_0}(F)}\to 0
%\end{equation}
%as $n\to \infty$.
%As shown in Walsh \cite[]{Walsh}, Busemann's function for a Teichm\"{u}ller ray $\ray$ issued at $x_0 \in \teich(\Sigma_g)$ directed by a uniquely ergodic projective lamination $[F]$  is expressed as $\displaystyle B_\ray(x)=\frac{1}{2}\log \frac{\ext_x(F)}{\ext_{x_0}(F)}$
%(See also \cite{Miy2} and \cite{MiyUnif}.)
%Liu and Su \cite{LiuSu1} proved that the Gardiner--Masur compactification of $\teich_{g}$ is the horofunction compactification with respect to the Teichm\"uller distance.
%Since $F$ is uniquely ergodic, it is indecomposable and has only one ergodic component.
%Hence, by Walsh's formula for Busemann functions \cite[Theorem 1]{Walsh}, 
%Therefore,  in our situation, we have
%\[
%\lim_{t\to \infty}\left(d_T(\omega_n(x_0),\ray_{F}^{x_0}(t))-t\right)
%=\frac{1}{2}\log\frac{\ext_{\omega_n(x_0)}(F)}{\ext_{x_0}(F)}
%\]
%(see also \cite{Miy2} and \cite{MiyUnif}).
Then, by definition, there exists a sequence of distinct elements $([\omega_n])$ of $G$ such that  $\lim_{t\to \infty}(d_T([\omega_n]_*(x_0),\ray^{F}_{x_0}(t))-t)=-\infty$.
Therefore, there exists a monotone increasing sequence $(t_n)$ tending to $\infty$ such that
\begin{equation}
\label{infinity}
d_T([\omega_n]_*(x_0),\ray^{F}_{x_0}(t_n))-t_n\to -\infty
\end{equation}
as $n\to \infty$.
Set
\[
D_n=d_T([\omega_n]_*(x_0),\ray^{F}_{x_0}(t_n)).
\]
Let $h_n\colon M_0\to M_{t_n}$ be the Teichm\"{u}ller map homotopic to
$f_{t_n}\circ \omega_n\circ f_0^{-1}$.
Then the maximal dilatation of $h_n$ is $e^{2D_n}$, and by (\ref{infinity}), we have  $t_n-D_n\to \infty$ as $n\to \infty$.
Moreover, since $h_n$ was assumed to be homotopic to $f_{t_n}\circ \omega_n\circ f_0^{-1}$, we see that
$f_{t_n}$ is homotopic to $h_n\circ f_0\circ \omega_n^{-1}$.
Thus we have shown that (b) holds.

Conversely, suppose that (b) holds. Since $h_n$ has maximal dilatation $e^{2D_n}$, (\ref{extreme}) gives
\[
\frac{\ext_{\omega_n(x_0)}(F)}{\ext_{x_0}(F)}
\le e^{2D_n}
\frac{\ext_{\ray^{F}_{x_0}(t_n)}(F)}{\ext_{x_0}(F)}
=e^{2D_n-2t_n}\to 0
\]
as $n\to \infty$.
By Walsh's theorem \cite{Walsh}, when $F$ is minimal and uniquely ergodic, $\busemann_F^{x_0}(x)=\frac{1}{2}\log \frac{\ext_x(F)}{\ext_{x_0}(F)}$.
Therefore, we have $\busemann_F^{x_0}(\omega_n(x_0) \to -\infty$, which means that  $[F]$ is a horospherical limit point of $G$.
\end{proof}

\section{Proof of the main theorem}

In this section, we shall prove \Cref{thm:main1}.
Before starting to consider each case, we recall a result of Farb and Masur \cite{FaMa1}.
Following them, we say that a Teichm\"uller geodesic ray $\ray^F_{x_0}$ is  \emph{almost minimising} if there are constants $t_0$, $C>0$ such that 
\[
d_{\moduli}(\varpi(\ray^F_x(t_0)),\varpi(\ray^F_{x_0}(t)))\ge |t-t_0|-C.
\]
In the paper mentioned above, Farb and Masur showed the following.

\begin{theorem}[Theorem 1.4 in Farb--Masur {\cite{FaMa1}}]
    \label{thm:Farb--Masur}
For $[F]\in \pmf$, the following two conditions are equivalent.
\begin{itemize}
\item For any point $x_0 \in \teich(\Sigma_g)$, the  Teichm\"uller geodesic ray $\ray^F_{x_0}$ is almost distance minimising.
\item The quadratic differential $q_{F,x_0}$ is mixed Strebel; that is, the vertical foliation $F$ contains a cylinder foliated by  non-singular closed trajectories.
\end{itemize}
\end{theorem}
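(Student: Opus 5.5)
The plan is to prove the two implications separately. Both run through one mapping-class-invariant quantity: the \emph{extremal systole}
\[
s(x)=\min_{\beta\in\mathcal S}\ext_x(\beta).
\]
Since $s$ is $\mcg(\Sigma_g)$-invariant, it descends to $\moduli(\Sigma_g)$. Taking the minimum over $\beta$ in \eqref{extreme}, and then the infimum in \eqref{eq:distance_modulispace}, gives
\[
d_\moduli(\varpi(x),\varpi(y))\ \ge\ \tfrac12\,\bigl|\log s(x)-\log s(y)\bigr|.
\]
The goal is to show that the ray is almost minimising exactly when $s(\ray^F_{x_0}(t))$ decays like $e^{-2t}$, and that this happens exactly when $F$ has a cylinder.

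\emph{Mixed Strebel $\Rightarrow$ almost minimising.} Suppose $F$ contains a cylinder $A$ of non-singular closed leaves with core curve $\alpha$. Along $\ray^F_{x_0}$ the leaves of $F$ are contracted by $e^{-t}$ and the transverse direction is expanded by $e^{t}$; this is consistent with $\ext_{\ray^F_{x_0}(t)}(F)=e^{-2t}\ext_{x_0}(F)$. Hence the flat modulus of $A$ in the metric $|q(t)|$ equals $e^{2t}\,\mathrm{Mod}_{x_0}(A)$. Since $\ext(\alpha)\le 1/\mathrm{Mod}(A)$, we get $s(\ray^F_{x_0}(t))\le Ce^{-2t}$. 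Fix $t_0$; then $s(\ray^F_{x_0}(t_0))$ is a fixed positive number. The displayed inequality gives $d_\moduli\ge t-C'$, which is $\ge |t-t_0|-C''$ for $t\ge t_0$. For $t<t_0$ the range is compact, so $C''$ can be enlarged to cover it. This works for any base point $x_0$.

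\emph{No cylinder $\Rightarrow$ not almost minimising.} Suppose $F$ has no cylinder, so every component of $F$ is minimal on its supporting subsurface. The idea is to bound $d_\moduli$ from above via the thick part $\moduli_{\ge\epsilon}$, which is compact. I would use the estimate, which follows from Minsky's product region theorem (or Kerckhoff's formula together with Rafi's estimates),
\[
d_T(x,\teich_{\ge\epsilon})\le \tfrac12\log\frac1{s(x)}+C.
\]
This gives
\[
d_\moduli(\varpi\ray(t_0),\varpi\ray(t))\le \tfrac12\log\frac{1}{s(\ray(t))}+C(t_0).
\]
It therefore suffices to show that $e^{2t}s(\ray^F_{x_0}(t))\to\infty$, for then $d_\moduli-(t-t_0)\to-\infty$ and no constant $C$ can work. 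Consider a curve $\beta$ with $s(\ray(t))=\ext_{\ray(t)}(\beta)$, and split into two cases.
\begin{enumerate}
\item If $I(F,\beta)>0$, Minsky's inequality gives $\ext_{\ray(t)}(\beta)\ge I(F,\beta)^2e^{2t}/\ext_{x_0}(F)$. Because intersection numbers with $F$ are bounded below on curves meeting $F$, this is large.
\item If $I(F,\beta)=0$, then $\beta$ is a boundary curve of a minimal supporting subsurface, or lies in its complement. Its extremal length is controlled by the flat geometry of $q(t)$ through Rafi's formula: the $q(t)$-length of $\beta$ together with the moduli of the expanding and flat annuli around it. With no flat cylinder present, the flat annulus contributes only bounded modulus. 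Each expanding annulus has modulus $O(\log)$ of the ratio of the flat sizes of its neighbouring components. Because those components are minimal, their flat size shrinks strictly slower than $e^{-t}$, by Masur's criterion applied to each component.
\end{enumerate}
Together these should give $e^{2t}\ext_{\ray(t)}(\beta)\to\infty$ uniformly in $\beta$.

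I expect the main obstacle to be case (2): ruling out $e^{-2t}$ decay for curves disjoint from $F$ when there is no cylinder. It is exactly here that the absence of cylinders must enter quantitatively. My first attempt would be to write the $q(t)$-geometry of each minimal component as a Teichmüller ray on that subsurface, and use Masur's criterion (non-divergence on a sequence of times, or at least subexponential shrinking) to show that its boundary lengths decay more slowly than $e^{-t}$.
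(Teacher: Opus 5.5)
The paper gives no proof of this statement; it is quoted from Farb--Masur, so your argument has to stand on its own.

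\textbf{What works.} Your direction ``cylinder $\Rightarrow$ almost minimising'' is correct and complete. The modulus of the cylinder grows like $e^{2t}$, which gives $s(\ray^F_{x_0}(t))\le Ce^{-2t}$, and the $\mcg(\Sigma_g)$-invariance of $s$ together with \eqref{extreme} gives the lower bound on $d_\moduli$. Your reduction of the converse is also sound: the estimate $d_T(x,\teich_{\ge\epsilon})\le\frac12\log(1/s(x))+C$ and compactness of the thick part reduce it to showing $e^{2t}s(\ray^F_{x_0}(t))\to\infty$. In fact $\limsup=\infty$ suffices.

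\textbf{The gap.} The problem is in proving that growth. Case (1) rests on the claim that $I(F,\beta)$ is bounded below over curves meeting $F$, and this is false. For any minimal component of $F$ there are simple closed curves, obtained by closing up a long leaf segment with a short transverse arc, with arbitrarily small positive $I(F,\beta)$. When the ray diverges, these are exactly the curves that become short. Minsky's inequality then only gives $e^{2t}\ext_{\ray^F_{x_0}(t)}(\beta)\ge I(F,\beta)^2e^{4t}/\ext_{x_0}(F)$. This says nothing about the time-$t$ systole curve unless you have a lower bound of order $e^{-2t}$ on its intersection with $F$. So the ``uniformly in $\beta$'' in your conclusion is precisely the part that has not been proved.

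\textbf{How to close it.} The missing observation is that \eqref{extreme} gives $e^{2t}\ext_{\ray^F_{x_0}(t)}(\beta)\ge\ext_{x_0}(\beta)$ for every curve $\beta$.
\begin{enumerate}
\item Suppose $e^{2t_n}s(\ray^F_{x_0}(t_n))\le K$ along some sequence $t_n\to\infty$. Then the realising curves lie in the finite set $\{\beta:\ext_{x_0}(\beta)\le K\}$. After passing to a subsequence, a single curve $\beta$ realises all of them.
\item Minsky's inequality then forces $I(F,\beta)=0$, since otherwise $\ext_{\ray^F_{x_0}(t)}(\beta)\to\infty$. So only a fixed curve disjoint from $F$ needs to be treated.
\item For such a $\beta$, the $q_t$-geodesic representative is a chain of vertical saddle connections, so $\ell_{q_t}(\beta)=e^{-t}\ell_{q_0}(\beta)$.
\item Any flat cylinder homotopic to $\beta$ would have to be vertical, hence a cylinder of $F$, so there is none.
\item Minsky's (or Rafi's) decomposition bounds every annulus homotopic to $\beta$ by flat and expanding pieces. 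In a unit-area surface each expanding piece has modulus $O(\log(1/\ell_{q_t}(\beta)))=O(t)$.
\item Hence $\ext_{\ray^F_{x_0}(t)}(\beta)\gtrsim 1/t$, which contradicts $\ext_{\ray^F_{x_0}(t_n)}(\beta)\le Ke^{-2t_n}$.
\end{enumerate}

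\textbf{Your case (2).} The same calculation shows that the obstacle you identified there is not the real one. Masur's criterion gives no rate of shrinking at all, so it cannot justify ``flat size shrinks strictly slower than $e^{-t}$''. No such rate is needed anyway: the logarithmic contributions are $O(t)=o(e^{2t})$ whatever the sizes of the neighbouring components. Only a flat cylinder could produce $e^{-2t}$ decay. The real difficulty is uniformity over the varying systole curve, and the finiteness reduction above handles it.
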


Now we start the proof of \Cref{thm:main1}.

\smallskip
\noindent {\bf The if part of (1).}
Suppose that $F$ has a non-singular closed leaf.
Then by definition, $q_{F,x_0}$ is mixed Strebel.
This implies by \Cref{thm:Farb--Masur} that $\ray^F_{x_0}$ is almost minimising.
Therefore, for any $x \in \teich(S)$, there exists a constant $C'$ such that 
\begin{equation}
\label{almost minimising}
d_{\moduli}(\varpi(x), \varpi (\ray^F_{x_0}(t))) \geq t-C'.
\end{equation}
This means that for any sequence $([\omega_n])$ in $\mcg(\Sigma_n)$, we have $d_T([\omega_n]_*(x), \ray^F_{x_0}(t)) \geq t -C'$, and hence $b_F^{x_0}(\omega_n(x))\geq -C'$, and it cannot go to $-\infty$.
By definition, this means that $[F]$ is not a horospherical limit point.

We shall next show that $[F]$ is a big horospherical limit point.
%By \cref{almost minimising}, for any $g\in \mcg(\Sigma_g)$, we have $d_T(gx_0, \ray^F_x(t))\geq t-C$.
%Therefore, $\busemann_F^{x_0}(gx_0) 
Let $\gamma$ be a simple closed curve isotopic to a closed leaf of $F$.
Let $\tau_\gamma \colon \Sigma_g \to \Sigma_g$ be the Dehn twist around $\gamma$, which is regarded as an action on $\teich(\Sigma_g)$.
We note that $\tau_\gamma(F)=F$ since $\gamma$ is isotopic to a leaf of $F$.

Let $F=\sum_{j=1}^k G_j$ be the ergodic decomposition of $F$.
(See \cref{mf}.)
In \cite[Theorem 1]{Walsh}) Walsh showed that for any measured foliation $\mathcal G$, the following holds:
\begin{equation}
\label{Walsh equality}
\lim_{t\to \infty}e^{-t}\ext_{\ray^F_{x_0}(t)}(\mathcal G)^{1/2}=
\left(
\sum_{j=1}^k\frac{I(G_j,\mathcal G)^2}{I(G_j,H(q_{F,x_0}))}
\right)^{1/2},
\end{equation}
where $H(q_{F,x_0})$ denotes the horizontal measured foliation of the quadratic differential $q_{F, x_0}$.
We denote the right hand side of (\ref{Walsh equality}) by $\mathcal{E}_{F, x_0}(\mathcal G)$.

Since $\tau_\gamma$ fixes $F$, by the uniqueness of  the ergodic decomposition, $\tau_\gamma$ just permutes $\{G_j\}$.
Indeed, $\tau_\gamma$ fixes each $G_j$, but the following argument holds for any mapping class fixing $F$ and hence permuting $\{G_j\}$.
Therefore we have the following.
\begin{equation}
\label{Walsh}
\begin{split}
&\sup_{\mathcal G \in \mf}
\frac{\mathcal{E}_{F,x_0}(\mathcal G)}{\ext_{[\tau_\gamma^n]_*(x)}(\mathcal G)^{1/2}} \\
&=\sup_{\mathcal G \in \mf}\left(
\sum_{j=1}^k\frac{1}{I(G_j,H(q_{F,x_0}))}\frac{I(\tau_\gamma^{-n}(G_j),\tau^{-n}_\gamma(\mathcal G))^2}{\ext_{x}(\tau_\gamma^{-n}(\mathcal G))}\right)^{1/2}\\
&=\sup_{\mathcal G \in \mf}\left(
\sum_{j=1}^k\frac{1}{I(G_j,H(q_{F,x_0}))}\frac{I(G_j,\tau_\gamma^{-n}(\mathcal G))^2}{\ext_{x}(\tau_\gamma^{-n}(\mathcal G))}\right)^{1/2}\\
&=\sup_{\mathcal G \in \mf}\frac{\mathcal{E}_{F,x_0}(\mathcal G)}{\ext_{x}(\mathcal G)^{1/2}} 
\end{split}
\end{equation}
By Kerckhoff's formula,  
\begin{align*}
\busemann_F^{x_0}(x)&=\log \left(\lim_{t\to \infty}e^{-t}\sup_{\mathcal G \in \mf }\left(\frac{\ext_{\ray^F_{x_0}(t)}(\mathcal G)}{\ext_{x}(\mathcal G)}\right)^{1/2}\right)\\
&=\log\left(\sup_{\mathcal G \in \mf}\frac{\mathcal{E}_{F,x_0}(\mathcal G)}{\ext_{x}(\mathcal G)^{1/2}} \right).
\end{align*}
Therefore, \Cref{Walsh} implies that $\busemann_F^{x_0}(x)=\busemann_F^{x_0}([\tau_\gamma^n]_*(x))$, and that $[F]$ is a big horospherical limit point.

%Recall that $d_{\moduli}(\varpi(x),\varpi(y))= \inf_{g \in \mcg(\Sigma_g)}d_T(gx,y)$.

%We note that $\tau_\gamma(F)=F$ since $\gamma$ is a leaf of $F$.
%We have, by definition,$$b_F^x(\tau^n(x_0))=\lim_{t\to \infty}(d_T(\tau^n(x_0), \ray^F_x(t))-t=\lim_{t\to \infty}(d_T(x_0, \ray^F_{\tau^{-n}(x)}(t))-t.$$

\smallskip
\noindent{\bf The proof of (2)}
Suppose that $F$ is either non-minimal or non-uniquely ergodic, and does not have a non-singular closed leaf.
Then by \Cref{Masur}, $[F]$ is not a conical limit point, and hence $\ray_{x_0}^F$ is divergent.
On the other hand, by \Cref{thm:Farb--Masur}, $\ray^F_{x_0}$ is not almost minimising.
This means, by definition, that for any $N$, there is $t_N>0$ such that
\[
d_{\moduli}(\varpi(x_0),\varpi(\ray^F_{x_0}(t_N)))<t_N-N
\]
We set $D_n=d_{\moduli}(\varpi(x_0),\varpi(\ray^F_x(t_N)))+1$ and
$\ray^F_{x_0}(t)=(M_t,f_t)$ for $t\ge 0$.
Then by \eqref{eq:distance_modulispace}, for any $N$, there is $[\omega_n]\in \mcg(\Sigma_{g})$ such that
\[
d_T([\omega_n]_*(x_0),\ray^F_{x_0}(t_N))<D_n.
\]
Hence, for each $N$, there is an $e^{2D_N}$-quasi-conformal map $h_N\colon M_0\to M_{t_N}$ such that $h_N\circ f_0\circ \omega_n^{-1}$ is homotopic to $f_{t_N}$.
Furthermore,
since $t_N-D_N>N-1$, we have $t_N-D_N \to \infty$ as $N\to \infty$. 
It follows from \Cref{prop:horospherical_limit_points2}, that $[F]$ is a horospherical limit point of $\mcg(\Sigma_{g})$.

\smallskip
\noindent{\bf The proof of (3).}
This is just a restatement of a part of \Cref{Masur}.

\smallskip
\noindent{\bf The proof of (4).}
Suppose that $F$ is minimal and uniquely ergodic.
Then it cannot contain a non-singular closed leaf.
Therefore, by \Cref{thm:Farb--Masur}, $\ray_{x_0}^F$ is not almost minimising.
Then $\ray_{x_0}^F$ is either recurrent or divergent but not almost minimising.
In the first case, $[F]$ is a conical limit point, by definition.
In the second case, by the same argument as (2), we see that $[F]$ is a non-conical horospherical limit point.

We shall next show that the two possibilities: those of conical limit points and of non-conical horospherical limit points really occur.
If we let $F$ be a stable measured foliation of a pseudo-Anosov map, which is always minimal and uniquely ergodic, then $\ray_{x_0}^F$ is recurrent.
Therefore, the first possibility  really occurs.

In \cite{CheungMa},  Cheung and Masur showed that there is a minimal and uniquely ergodic measured foliation $F\in \mf$ such that $\ray^F_{x_0}$ is divergent.
This means by definition that  $\varpi(\ray^F_{x_0}(t))$ ($t\ge 0$) eventually leaves any compact subset of $\moduli_{g}$, and hence that $[F]$ is not a conical limit point of $\mcg(\Sigma_{g})$.

On the other hand, since $F$ is minimal and uniquely ergodic, it does not have a non-singular closed leaf.
Therefore, by \Cref{thm:Farb--Masur}, $\ray_{x_0}^F$ is not almost minimising, and by the same argument as (2), we see that $[F]$ is a horospherical limit point.

\smallskip
\noindent{\bf The only if part of (1).}
Suppose that  $[\lambda]$ is a big horospherical limit point, but not a horospherical limit point.
Then by (2), $\lambda$ must either be minimal and uniquely ergodic or contain a non-singular closed leaf.
By (4), $\lambda$ cannot be minimal and uniquely ergodic.
Therefore, $\lambda$ must contain a non-singular closed leaf.

%By \cref{thm:Farb--Masur}, the ray cannot be almost distance minimising.
%Hence, for any $N$, there is $t_N>0$ such that
%\[
%d_{\moduli}(\varpi(x_0),\varpi(\ray^F_{x_0}(t_N)))<t_N-N
%\]
%We set $D_n=d_{\moduli}(\varpi(x_0),\varpi(\ray^F_x(t_N)))+1$ and
%$\ray^F_{x_0}(t)=(M_t,f_t)$ for $t\ge 0$.
%Then by \eqref{eq:distance_modulispace}, for any $N$, there is $[\omega_n]\in \mcg(\Sigma_{g})$ such that
%\[
%d_T([\omega_n]_*(x_0),\ray^F_{x_0}(t_N))<D_n.
%\]
%Hence, for each $N$, there is $e^{2D_N}$-quasi-conformal map $h_N\colon M_0\to M_{t_N}$ such that $h_N\circ f_0\circ \omega_n^{-1}$ is homotopic to $f_{t_N}$.
%Furthermore,
%since $t_N-D_N>N-1$, $t_N-D_N \to \infty$ as $N\to \infty$. 
%It follows from \Cref{prop:horospherical_limit_points2}, that $[F]$ is a horospherical limit point of $\mcg(\Sigma_{g})$.

\smallskip
\noindent {\bf The proof of (5).}
It remains to show that the complement of $\Lambda_C(\mcg(\Sigma_g))$ is a null set in terms of the Thurston measure. Since the Teichm\"uller geodesic flow is conservative, almost every Teichm\"uller ray is recurrent in the moduli space $\mathcal{M}_g$ (see \cite{MasurInterval} and \cite{Veech}). This means that almost every point in $\pmf$ is a conical limit point in terms of the Thurston measure, which means that $\Lambda_C(\mcg(\Sigma_g))$ is a full set, and its complement is a null set.

%\subsection{Non-horospherical limit points}
%From Farb--Masur \cite{FaMa1}, we have the following characterization of non-horospherical limit points.
%
%\begin{theorem}
%Let $[\lambda]\in \pmf$ is not a horospherical limit point if and only if $[\lambda]$ contains a foliated cylinder.
%\end{theorem}


\begin{thebibliography}{10}

\bibitem{BeMa}
{\sc Beardon, A.~F., and Maskit, B.}
\newblock {L}imit points of {K}leinian groups and finite sided fundamental
  polyhedra.
\newblock {\em Acta Math. 132\/} (1974), 1--12.

\bibitem{Bi}
{\sc Bishop, C.~J.}
\newblock On a theorem of {B}eardon and {M}askit.
\newblock {\em Ann. Acad. Sci. Fenn. Math. 21}, 2 (1996), 383--388.

\bibitem{Bi2}
{\sc Bishop, C.~J.}
\newblock The linear escape limit set.
\newblock {\em Proc. Amer. Math. Soc. 132}, 5 (2004), 1385--1388.

\bibitem{CheungMa}
{\sc Cheung, Y., and Masur, H.}
\newblock {A} divergent {T}eichm{\"u}ller geodesic with uniquely ergodic
  vertical foliation.
\newblock {\em Israel J. Math. 152\/} (2006), 1--15.

\bibitem{FaMa1}
{\sc Farb, B., and Masur, H.}
\newblock {T}eichm{\"u}ller geometry of moduli space, {I}: distance minimizing
  rays and the {D}eligne-{M}umford compactification.
\newblock {\em J. Differential Geom. 85}, 2 (2010), 187--227.

\bibitem{FLP}
{\sc Fathi, A.;~Po{\'e}naru, V., and Laudenbach, F.}
\newblock {\em Travaux de {T}hurston sur les surfaces}, vol.~66 of {\em
  Ast\'{e}risque}.
\newblock Soci\'{e}t\'{e} Math\'{e}matique de France, Paris, 1979.
\newblock S\'{e}minaire Orsay, With an English summary.

\bibitem{Gard1}
{\sc Gardiner, F.~P.}
\newblock {M}easured foliations and the minimal norm property for quadratic
  differentials.
\newblock {\em Acta Math. 152}, 1-2 (1984), 57--76.

\bibitem{GaMa}
{\sc Gardiner, F.~P., and Masur, H.}
\newblock {E}xtremal length geometry of {T}eichm{\"u}ller space.
\newblock {\em Complex Variables Theory Appl. 16}, 2-3 (1991), 209--237.

\bibitem{HM}
{\sc Hubbard, J., and Masur, H.}
\newblock {Q}uadratic differentials and foliations.
\newblock {\em Acta Math. 142}, 3-4 (1979), 221--274.

\bibitem{Ivanov}
{\sc Ivanov, N.~V.}
\newblock {I}sometries of {T}eichm{\"u}ller spaces from the point of view of
  {M}ostow rigidity.
\newblock In {\em Topology, ergodic theory, real algebraic geometry}, vol.~202
  of {\em Amer. Math. Soc. Transl. Ser. 2}. Amer. Math. Soc., Providence, RI,
  2001, pp.~131--149.

\bibitem{LeiKen}
{\sc Kent, IV, R.~P., and Leininger, C.~J.}
\newblock {S}hadows of mapping class groups: capturing convex cocompactness.
\newblock {\em Geom. Funct. Anal. 18}, 4 (2008), 1270--1325.

\bibitem{Ker}
{\sc Kerckhoff, S.~P.}
\newblock The asymptotic geometry of {T}eichm\"uller space.
\newblock {\em Topology 19}, 1 (1980), 23--41.

\bibitem{LM}
{\sc Lecuire, C., and Mj, M.}
\newblock Horospheres in degenerate 3-manifolds.
\newblock {\em Int. Math. Res. Not. IMRN}, 3 (2018), 816--861.

\bibitem{MasurInterval}
{\sc Masur, H.}
\newblock {I}nterval exchange transformations and measured foliations.
\newblock {\em Ann. of Math. (2) 115}, 1 (1982), 169--200.

\bibitem{Mas2}
{\sc Masur, H.}
\newblock Two boundaries of {T}eichm\"uller space.
\newblock {\em Duke Math. J. 49}, 1 (1982), 183--190.

\bibitem{Mas}
{\sc Masur, H.}
\newblock Hausdorff dimension of the set of nonergodic foliations of a
  quadratic differential.
\newblock {\em Duke Math. J. 66}, 3 (1992), 387--442.

\bibitem{MW}
{\sc Masur, H.~A., and Wolf, M.}
\newblock Teichm\"uller space is not {G}romov hyperbolic.
\newblock {\em Ann. Acad. Sci. Fenn. Ser. A I Math. 20}, 2 (1995), 259--267.

\bibitem{McPa}
{\sc McCarthy, J., and Papadopoulos, A.}
\newblock {D}ynamics on {T}hurston\textquotesingle s sphere of projective
  measured foliations.
\newblock {\em Comment. Math. Helv. 64}, 1 (1989), 133--166.

\bibitem{Min}
{\sc Minsky, Y.~N.}
\newblock Extremal length estimates and product regions in {T}eichm\"uller
  space.
\newblock {\em Duke Math. J. 83}, 2 (1996), 249--286.

\bibitem{Po}
{\sc Poincar{\'e}, H.}
\newblock M{\'e}moire sur les groupes klein{\'e}ens.
\newblock {\em Acta Math. 3\/} (1883), 49--92.

\bibitem{ST}
{\sc Su, W., and Tan, D.}
\newblock {Horospheres in Teichm{\"u}ller space and mapping class group}.
\newblock {\em Annales de l'Institut Fourier 73}, 4 (2023), 1677--1707.

\bibitem{Su}
{\sc Sullivan, D.}
\newblock On the ergodic theory at infinity of an arbitrary discrete group of
  hyperbolic motions.
\newblock In {\em Riemann surfaces and related topics: {P}roceedings of the
  1978 {S}tony {B}rook {C}onference ({S}tate {U}niv. {N}ew {Y}ork, {S}tony
  {B}rook, {N}.{Y}., 1978)\/} (1981), vol.~No. 97 of {\em Ann. of Math. Stud.},
  Princeton Univ. Press, Princeton, NJ, pp.~465--496.

\bibitem{Tu}
{\sc Tukia, P.}
\newblock Conservative action and the horospheric limit set.
\newblock {\em Ann. Acad. Sci. Fenn. Math. 22}, 2 (1997), 387--394.

\bibitem{Veech}
{\sc Veech, W.~A.}
\newblock Gauss measures for transformations on the space of interval exchange
  maps.
\newblock {\em Ann. of Math. (2) 115}, 1 (1982), 201--242.

\bibitem{Walsh}
{\sc Walsh, C.}
\newblock {T}he asymptotic geometry of the {T}eichm{\"u}ller metric.
\newblock {\em Geom. Dedicata 200\/} (2019), 115--152.

\end{thebibliography}
\end{document}